\documentclass[10pt]{amsart}
\usepackage{amsfonts}
\usepackage{ifthen}
\usepackage{amsthm}
\usepackage{amsmath}
\usepackage{graphicx}
\usepackage{amscd,amssymb,amsthm}

\newcounter{minutes}
\divide\time by 60
\newcounter{hours}
\multiply\time by 60 \addtocounter{minutes}{-\time}

\newtheorem{theorem}{Theorem}

\keywords{Bessel function of the first kind; zeros of Bessel functions of the first kind; infinite series; Taylor series coefficients; Mittag-Leffler expansion; Riccati differential equation; recursion; recursive algorithm.} \subjclass[2010]{33C10.}

\title[Riccati type recursions for some infinite series involving zeros of Bessel functions]{Riccati type recursions for some infinite series involving zeros of Bessel functions of the first kind}

\author[\'A. Baricz]{\'Arp\'ad Baricz}
\address{Department of Economics, Babe\c{s}-Bolyai University, 400591 Cluj-Napoca, Romania}
\address{Institute of Applied Mathematics, \'Obuda University, 1034 Budapest, Hungary}
\email{bariczocsi@yahoo.com}

\author[A.F. Skorka]{Antal Ferenc Skorka}
\address{Institute of Applied Mathematics, \'Obuda University, 1034 Budapest, Hungary}
\email{skorka.antal.ferenc@gmail.com}

\begin{document}
	
\def\thefootnote{}
\footnotetext{ \texttt{File:~\jobname .tex,
printed: \number\year-0\number\month-\number\day,
\thehours.\ifnum\theminutes<10{0}\fi\theminutes}}
\makeatletter\def\thefootnote{\@arabic\c@footnote}\makeatother
	
\maketitle

\begin{abstract}
Some infinite series involving the positive zeros of Bessel functions of the first kind are investigated. The motivation behind these series lies in quantum mechanical perturbation problems, in which energies and matrix elements of unperturbed states are expressible in terms of zeros of Bessel functions of the first kind. The existing approach by Pedersen and Urbanowicz for calculating these series involves the Thomas-Reiche-Kuhn sum rule, differential recurrences for powers of Bessel function ratios or application of Lommel polynomials. In this paper an alternative approach is provided: a recursive algorithm is proposed that theoretically can produce the infinite series values in question. Our method is relatively simple and rely on three main ingredients: the Mittag-Leffler expansion and Riccati differential equation for the quotient of Bessel functions of the first kind, as well as the Taylor series coefficients of these ratios. The technique employed in the paper could be useful to treat similar problems where infinite series of zeros of special functions is involved.
\end{abstract}

\section{\bf Evaluation of some infinite series involving zeros of Bessel functions}

Let us consider the infinite series constructed from the zeros of the Bessel function of the first kind of the following form
$$\mathcal{S}_r(p,n):=\sum_{m\geq1} \frac{j_{p,n}^2 j_{p+1,m}^2}{(j_{p+1,m}^2 - j_{p,n}^2)^r},$$
where $j_{p,n}$ denotes the $n$th positive zero of the Bessel function of the first kind $J_{p}$ of order $p>-1$ and $r$ is a natural number. This infinite series has been investigated recently by Pedersen \cite{Pedersen2018} in the following particular cases
$$\sum_{m\geq1}\frac{j_{p,n}^{2}j_{p+1,m}^{2}}{\left(j_{p+1,m}^{2}-j_{p,n}^{2}\right)^{3}}=\frac{p+1}{8},$$
$$\sum_{m\geq1}\frac{j_{p,n}^{2}j_{p+1,m}^{2}}{\left(j_{p+1,m}^{2}-j_{p,n}^{2}\right)^{5}}=\frac{2(p-2)(p^{2}-1)+(4p+1)j_{p,n}^{2}}{192\,j_{p,n}^{4}},$$
$$\sum_{m\geq1}\frac{j_{p,n}^{2}j_{p+1,m}^{2}}{\left(j_{p+1,m}^{2}-j_{p,n}^{2}\right)^{7}}=\frac{4(p-4)(p-3)(p-2)(p^{2}-1)+
2\left(16+p-49p^{2}+26p^{3}\right)j_{p,n}^{2}+(34p-1)j_{p,n}^{4}}{11520\,j_{p,n}^{8}}.$$
The motivation behind these series lies in quantum mechanical perturbation problems, since Bessel functions directly describe quantum states in spherical and cylindrical geometries. Under perfect confinement, the energies of Schr\"odinger particles are determined by zeros of Bessel functions of the first kind, while those of massless Dirac fermions are determined by intersections of consecutive zeros of Bessel functions of the first kind. When an external electric field is applied, perturbation theory expresses the polarizability as a sum over these zeros or intersections. For the models considered here, the polarizabilities can also be obtained analytically. Comparing the exact expressions with the perturbative sum-over-states formulas then produces rigorous sum rules. Thus, the above mentioned mathematical results are motivated by concrete quantum mechanical perturbation problems. For more details we refer to the paper \cite{Pedersen2018}.

In the present paper we are going first to investigate the infinite series $\mathcal{S}_r(p,n)$ considered by Pedersen. We note that the series $\mathcal{S}_3(p,n),$ $\mathcal{S}_5(p,n)$ and $\mathcal{S}_7(p,n)$ were deduced by Pedersen \cite{Pedersen2018} by using the so-called Thomas–Reiche–Kuhn sum rule, which states that the sum of the oscillator strengths for all possible electric dipole transitions from a given initial state is equal to the total number of electrons in the system. Recently, Urbanowicz \cite{Urbanowicz2026} proved that the series considered by Pedersen can be expressed via Calogero type sums by using the elementary relation
\begin{equation}\label{eqUrb}\sum_{m\geq1}\frac{j_{p,n}^2{j_{p+1,m}^2}}{\left({j_{p+1,m}^2}-j_{p,n}^2\right)^r}=j_{p,n}^2\sum_{m\geq1}\frac{1}{\left({j_{p+1,m}^2}-j_{p,n}^2\right)^{r-1}}
+j_{p,n}^4\sum_{m\geq1}\frac{1}{\left({j_{p+1,m}^2}-j_{p,n}^2\right)^r},\end{equation}
and the above Calogero type sums can be evaluated with the help of some differential recurrences for powers of Bessel function ratios. Moreover, Urbanowicz \cite{Urbanowicz2026} in particular evaluated the series $\mathcal{S}_2(p,n)$ and $\mathcal{S}_4(p,n).$ For more details on these series we refer to the recent papers \cite{fattah}, \cite{nowak}, \cite{Urbanowicz2023}, \cite{Urbanowicz2026} and to the references therein.

Now, we are going first to present a novel elementary alternative approach to evaluate the series $\mathcal{S}_r(p,n)$ for arbitrary $r$ and we will show later that this idea is working well also in the case of Calogero type sums. As a starting point we use the Mittag-Leffler expansion for Bessel functions of the first kind
\[\frac{J_{p+1}(x)}{J_p(x)}=\sum_{m\geq1}\frac{2x}{j_{p,m}^{2}-x^{2}},\]
and the three-term recurrence relation
\begin{equation}\label{eqthreeterm}
J_{p-1}(x)+J_{p+1}(x)=\frac{2p}{x}J_{p}(x).
\end{equation}
By changing $p$ to $p+1$ in the original Mittag-Leffler expansion as well as in the above recurrence relation, the Mittag-Leffler expansion
can be rewritten as \cite[eq. (30)]{Urbanowicz2023}
\begin{equation}\label{eq:bessel-fraction}
\frac{J_p(x)}{xJ_{p+1}(x)}=\frac{2(p+1)}{x^2}-\sum_{m\geq1}\frac{2}{j_{p+1,m}^2-x^2}.
\end{equation}
Now, differentiating $s-1$ times the right-hand side of the next expression
\[A_p(x):= \frac{J_p(\sqrt{x})}{\sqrt{x}\,J_{p+1}(\sqrt{x})}=\frac{2(p+1)}{x}-\sum_{m\geq1}\frac{2}{j_{p+1,m}^2-x},\]
we arrive at
\begin{equation}\label{eq:sum-q-from-A}A_p^{(s-1)}(x)=2(p+1)(-1)^{s-1}(s-1)!\,x^{-s}-2(s-1)!\sum_{m\geq1}\frac{1}{({j_{p+1,m}^2}-x)^s},\end{equation}
where $s\in\mathbb{N}.$ Consequently, we obtain that
\[\sum_{m\geq1}\frac{1}{({j_{p+1,m}^2}-j_{p,n}^2)^s}=(p+1)(-1)^{s-1}\left(j_{p,n}^2\right)^{-s}-\frac{A_p^{(s-1)}\left(j_{p,n}^2\right)}{2(s-1)!}\]
and applying \eqref{eqUrb} and \eqref{eq:sum-q-from-A} (in the case when $s=r-1$ and $s=r$), we conclude that for all $r\in\{2,3,\ldots\}$
\begin{equation}\label{eq:master-derivative-allr}
\sum_{m\geq1}\frac{j_{p,n}^{2}j_{p+1,m}^{2}}{\left(j_{p+1,m}^{2}-j_{p,n}^{2}\right)^{r}}=
-\frac{j_{p,n}^2}{2(r-2)!}\,A_p^{(r-2)}(j_{p,n}^2)-\frac{j_{p,n}^4}{2(r-1)!}\,A_p^{(r-1)}(j_{p,n}^2).
\end{equation}
Therefore, if we know the derivatives of $A_p(x)$ of order $r-1$ and $r-2$, then we should be able to retrieve the infinite series from this. However, this falls short in practical terms and we need a method to produce the derivatives. What we can do to achieve this is that we derive a Riccati type ordinary differential equation for the function $A_p(x)$, expand it as a Taylor series, and recursively calculate its coefficients. Essentially, the infinite series for different powers are encoded into the Taylor series coefficients. By using the standard recurrence relations for Bessel functions of the first kind
\[J_p'(v)=\frac{p}{v}J_p(v)-J_{p+1}(v)\quad \mbox{and} \quad J_{p+1}'(v)=-\frac{p+1}{v}J_{p+1}(v)+J_{p}(v),\]
it is clear that $B_p(v)=J_p(v)/J_{p+1}(v)$ satisfies the Riccati type ordinary differential equation
\[B_p'(v)=\frac{2p+1}{v}B_p(v)-1-\left[B_p(v)\right]^2,\]
which implies that $A_p(x)=B_p(v)/v,$ where $x=v^2,$ satisfies the next Riccati type differential equation
\begin{equation}\label{eq:A-riccati}
2xA_p'(x)+x\left[A_p(x)\right]^2-2p\cdot A_p(x)+1=0,
\end{equation}
since we clearly have that
\[\frac{dA_p(x)}{dv}= \frac{B_p'(v)}{v} - \frac{B_p(v)}{v^2}=\frac{1}{v}\left[\frac{2p+1}{v}B_p(v) - 1 - \left[B_p(v)\right]^2\right] - \frac{B_p(v)}{v^2}= \frac{2p}{v}A_p(x) - \frac{1}{v} - v\left[A_p(x)\right]^2.\]
This will be the key connection to the recursion. Our main observation is that after we construct the derivatives of the function $x\mapsto A_p(x)$ defined earlier, we will be able to calculate all the infinite series for arbitrary $r.$ The construction is based on the above deduced Riccati differential equation, in which we consider the Taylor series expansion around the fixed zeros ${j^2_{p,n}}$. It is clear that for all $n\in\mathbb{N}$ we have $A_p\left({j^2_{p,n}}\right) = 0,$ by definition, and the trick here is that the coefficients of the Taylor series will encode the infinite series values. More precisely, since for each $n\in\mathbb{N}$ fixed we have $A_p\left({j^2_{p,n}}\right)=0,$ the Taylor expansion of $x\mapsto A_p(x)$ about \(x={j^2_{p,n}}\) in a small neighborhood has the form
\[A_p\left(t+j^2_{p,n}\right)=\sum_{k\geq1}\alpha_k t^k,\qquad \alpha_k=\frac{A_p^{(k)}\left({j^2_{p,n}}\right)}{k!},\]
and we set $\alpha_0=A_p\left({j^2_{p,n}}\right)=0.$ Substituting $x=t+j^2_{p,n}$ into the Riccati differential equation \eqref{eq:A-riccati}, we arrive at
\begin{equation}
2(t+j^2_{p,n})A_p'\left(t+j^2_{p,n}\right)+(t+j^2_{p,n})\left[A_p\left(t+j^2_{p,n}\right)\right]^2-2p\cdot A_p\left(t+j^2_{p,n}\right)+1=0.\label{eq:ric-taylor}
\end{equation}
On the other hand
\[A_p'\left(t+j^2_{p,n}\right)=\sum_{k\geq1}k \alpha_k t^{k-1},\]
while using the Cauchy product formula, a simple consideration yields that
\[\left[A_p\left(t+j^2_{p,n}\right)\right]^2=\left[\sum_{k\geq1} \alpha_k t^k\right]^2=\sum_{k\geq2}\left[\sum_{j=1}^{k-1} \alpha_j \alpha_{k-j}\right]t^k.\]
Comparing the constant terms in \eqref{eq:ric-taylor} we obtain that $2j^2_{p,n}\alpha_1+1=0,$ that is, $\alpha_1=-1/\left(2j^2_{p,n}\right).$ Now, let \(k\ge 2\). The coefficient of \(t^k\) in \(2\left(t+j^2_{p,n}\right)A_p'\left(t+j^2_{p,n}\right)\) is
\[2j^2_{p,n}(k+1)\alpha_{k+1}+2k \alpha_k,\]
the coefficient of \(t^k\) in \(\left(t+j^2_{p,n}\right)\left[A_p\left(t+j^2_{p,n}\right)\right]^2\) is
\[j^2_{p,n}\sum_{j=1}^{k-1} \alpha_j \alpha_{k-j}+\sum_{j=1}^{k-2} \alpha_j \alpha_{k-1-j},\]
and the coefficient of \(t^k\) in \(-2p\cdot A_p\left(t+j^2_{p,n}\right)\) is \(-2p\cdot \alpha_k\). Therefore, the comparison of the coefficients in \eqref{eq:ric-taylor} yields
\[2j^2_{p,n}(k+1)\alpha_{k+1}+2(k-p)\alpha_k+j^2_{p,n}\sum_{j=1}^{k-1} \alpha_j \alpha_{k-j}+\sum_{j=1}^{k-2} \alpha_j \alpha_{k-1-j}=0,\]
or equivalently
\begin{equation}\label{eq:cn-recurrence-solved}
\alpha_{k+1}=-\frac{1}{2j^2_{p,n}(k+1)}\left[2(k-p)\alpha_k+j^2_{p,n}\sum_{j=1}^{k-1} \alpha_j \alpha_{k-j}+\sum_{j=1}^{k-2} \alpha_j \alpha_{k-1-j}\right], \quad k\geq2,
\end{equation}
with initial values $\alpha_0=0$ and $\alpha_1=-1/\left(2j^2_{p,n}\right).$ This recursion determines all coefficients $\alpha_k$ for $k\in\mathbb{N}.$ Now, recall that $\alpha_k=A_p^{(k)}({j_p,_n}^2)/k!,$ and consequently the identity \eqref{eq:master-derivative-allr} can be rewritten, for every integer $r\geq 2,$ as
\[\mathcal{S}_r(p,n)=\sum_{m\geq1}\frac{j_{p,n}^{2}j_{p+1,m}^{2}}{\left(j_{p+1,m}^{2}-j_{p,n}^{2}\right)^{r}}=-\frac12\left(j_{p,n}^2\alpha_{r-2}+j_{p,n}^4\alpha_{r-1}\right).\]
Accordingly, for a fixed $r\geq 2$ integer we can compute $\alpha_2,\alpha_3,\ldots,\alpha_{r-1}$ and then we can substitute the resulting coefficients into the above relation. Based on the recursion formula for $\alpha_{r+1}$ the algorithm runs in $\mathcal{O}(r^2)$ time provided that the $\alpha_r$ values are purely numerical, however the generation of these formulas requires symbolic calculation, and assuming that every $\alpha_r$ is already some complicated polynomial-like function, the complexity could be even $\mathcal{O}(r^3)$ or worse. However, various efficiency improvement practices can be applied to retrieve a high number of series values in a relatively short time. Although the optimization of the algorithm is out of scope of this paper, we note that the calculation of the coefficients can be made more efficient by FFT. The program that we used for generating the series is a simple Python script that recursively calculates the $\alpha_r$ coefficients, and assembles the $\mathcal{S}_r(p,n)$ final result. It utilizes some built-in packages (sympy) that support symbolic calculation. It has been observed that the program was able to exactly reproduce the existing results for $r \in\{1,2,3,4,5,6,7\}.$ The authors are open to provide access to the source code and running instructions if requested. Now, assuming the existing results, our first new formula is for $r=8$. For this we refer to Apendix A, where more evaluations can be found for  $r\in\{2,3,\ldots, 15\}$. Although the general closed formula is not yet known, due to the relatively simple recursive formula \eqref{eq:cn-recurrence-solved} it is still possible to deduce some basic structure. More precisely, it is clear that for $r\geq 3$ integer
\[\mathcal{S}_r(p,n)=\frac{P_r\!\left(p,\,j_{p,n}^{2}\right)}{2^{r}(r-1)!\,\left(j_{p,n}^{2}\right)^{r-3}},\]
where $P_r$ is a two-variable polynomial in $p$ of degree $r-2$ and in $j_{p,n}^2$ of degree at most $r-2$. By induction on the recurrence \eqref{eq:cn-recurrence-solved} for the coefficients $\alpha_{k+1}$ it is clear that every step adds exactly one power of $p$. The next result shows the basic structure of the coefficients in \eqref{eq:cn-recurrence-solved}. In what follows for a real number $\tau$ the expression $\left\lfloor \tau\right\rfloor$ means $\max\left\{\left.m\in\mathbb{Z}\right|m\leq\tau\right\},$ that is the greatest integer less than or equal to $\tau.$
	
\begin{theorem}
For every $n\geq1$ fixed and $k\geq1$ the coefficients in \eqref{eq:cn-recurrence-solved} can be written as
\[\alpha_k=\frac{R_k\left(p,\,j_{p,n}^{2}\right)}{2^k k!\,\left(j_{p,n}^{2}\right)^k}, \qquad \deg_{j_{p,n}^{2}} R_k\left(p,\,j_{p,n}^{2}\right)=\left\lfloor \frac{k-1}{2}\right\rfloor,\]
where $R_k$ is a polynomial. Consequently, for every integer $r\geq3$
\[\mathcal{S}_r(p,n)=\frac{P_r\left(p,\,j_{p,n}^{2}\right)}{2^r (r-1)!\,{\left(j_{p,n}^{2}\right)}^{\,r-3}},\]
where $P_r$ is a polynomial satisfying
\[\deg_{j_{p,n}^{2}} P_r\left(p,\,j_{p,n}^{2}\right)=\left\lfloor \frac{r-2}{2}\right\rfloor\]
or equivalently
\[\deg_{j_{p,n}}\!\left(\mbox{numerator of}\ \mathcal{S}_r\right)=2\left\lfloor \frac{r-2}{2}\right\rfloor=
\begin{cases}
r-2,& r\ \text{is even}\\
r-3,& r\ \text{is odd}
\end{cases}.\]
\end{theorem}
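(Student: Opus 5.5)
The plan is to clear the denominators in the recursion \eqref{eq:cn-recurrence-solved} and obtain an integer recursion for the numerators. Write $z=j_{p,n}^2$ and \emph{define} $R_k:=2^k k!\,z^k\alpha_k$. Multiplying \eqref{eq:cn-recurrence-solved} by $2^{k+1}(k+1)!\,z^{k+1}$ and rewriting each product through binomial coefficients should give
\[
R_{k+1}=-2(k-p)R_k-z\sum_{j=1}^{k-1}\binom{k}{j}R_jR_{k-j}-2kz\sum_{j=1}^{k-2}\binom{k-1}{j}R_jR_{k-1-j},
\]
with $R_1=-1$. I will first check that this same formula also reproduces the paper's $k=1$ step, the constant-term comparison: the sums are empty and it gives $R_2=2(p-1)$. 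An induction then shows that every $R_k$ is a polynomial in $p$ and $z$ with integer coefficients. This gives the displayed form of $\alpha_k$.

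\textbf{Upper bound on the degree.} Put $d_k=\lfloor (k-1)/2\rfloor$ and bound the three terms of the recursion separately.
\begin{itemize}
\item The term $R_k$ has $z$-degree $d_k\le d_{k+1}$.
\item The term $zR_jR_{k-j}$ has $z$-degree at most $1+\frac{j-1}{2}+\frac{k-j-1}{2}=\frac{k}{2}$, hence at most $\lfloor k/2\rfloor=d_{k+1}$.
\item The term $zR_jR_{k-1-j}$ has $z$-degree at most $\frac{k-1}{2}\le d_{k+1}$.
\end{itemize}
So $\deg_z R_k\le d_k$.

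\textbf{Exactness of the degree (the main obstacle).} The real difficulty is ruling out cancellation of the top coefficient. Let $L_k$ be the coefficient of $z^{d_k}$ in $R_k$; it is a polynomial in $p$. I would prove by simultaneous induction the following.
\begin{itemize}
\item For $k$ odd, $L_k$ is a negative integer constant.
\item For $k$ even, $L_k$ has degree exactly $1$ in $p$, and its $p$-coefficient is a negative integer.
\end{itemize}
The base cases are $L_1=-1$ and $L_2=2-2p$. The parity analysis goes as follows.
\begin{itemize}
\item \emph{$k+1$ odd, i.e.\ $k$ even.} Only the first sum, with $j$ and $k-j$ both odd, reaches degree $k/2$. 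The term $R_k$ has the smaller degree $(k-2)/2$, and the second sum reaches at most $(k-2)/2$. So $L_{k+1}=-\sum_{j\ \mathrm{odd}}\binom{k}{j}L_jL_{k-j}$, a sum of (positive)$\times$(negative constant)$\times$(negative constant) terms with an overall minus sign. This is a negative constant.
\item \emph{$k+1$ even, i.e.\ $k$ odd.} All three terms reach degree $(k-1)/2$: the term $-2(k-p)L_k$, the first sum with one index even, and the second sum with both indices odd. Take the coefficient of $p$. The first term contributes $2L_k<0$. The first sum contributes $-\binom{k}{j}\cdot(\text{negative})\cdot(\text{negative constant})<0$, since the even-indexed factor carries the $p$. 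The second sum involves only constants and contributes nothing to it. Hence the $p$-coefficient of $L_{k+1}$ is a negative integer, and $L_{k+1}$ is linear in $p$.
\end{itemize}
In particular $L_k\neq0$ for all $k$, so $\deg_z R_k=d_k$.

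\textbf{Consequence for $\mathcal{S}_r$.} Substitute into $\mathcal{S}_r(p,n)=-\tfrac12(z\alpha_{r-2}+z^2\alpha_{r-1})$. Since
\[
z\alpha_{r-2}=\frac{2(r-1)R_{r-2}}{2^{r-1}(r-1)!\,z^{r-3}},
\]
we get $P_r=-\big(2(r-1)R_{r-2}+R_{r-1}\big)$ over the denominator $2^r(r-1)!\,z^{r-3}$. Hence $\deg_z P_r\le\max(d_{r-2},d_{r-1})=\lfloor (r-2)/2\rfloor$.
\begin{itemize}
\item For $r$ even, $d_{r-1}=d_{r-2}+1$, so the leading term comes from $R_{r-1}$ alone and there is no cancellation.
\item For $r$ odd, the two degrees are equal. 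The top coefficient is then $-(2(r-1)L_{r-2}+L_{r-1})$, where $L_{r-2}$ is a constant and $L_{r-1}$ has a nonzero $p$-coefficient. So it is a nonzero polynomial in $p$.
\end{itemize}
This gives $\deg_z P_r=\lfloor (r-2)/2\rfloor$. Converting from $z=j_{p,n}^2$ to $j_{p,n}$ doubles the degree, which yields the stated case split $r-2$ for $r$ even and $r-3$ for $r$ odd.
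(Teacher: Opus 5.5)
Your proof is correct, and its skeleton matches the paper's: the same normalization $R_k=2^k k!\,z^k\alpha_k$, the same integer recursion, the same term-by-term floor bound, and the same identity $P_r=-\bigl(2(r-1)R_{r-2}+R_{r-1}\bigr)$.

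The real difference is in proving that the degrees are attained exactly.
\begin{itemize}
\item The paper only exhibits one contribution of top degree: an odd $j$ in the middle sum when $k$ is even, and the term $2(k-p)R_k$ when $k$ is odd. It then takes $\deg P_r$ as a maximum.
\item This does not exclude cancellation. For $k$ even, every odd $j$ contributes at degree $k/2$. For $k$ odd, all three groups of terms contribute at degree $(k-1)/2$. For $r$ odd, $R_{r-2}$ and $R_{r-1}$ have the same $z$-degree.
\item Your simultaneous induction on the leading coefficients $L_k$ closes exactly this gap. You show $L_k$ is a negative integer for odd $k$, and linear in $p$ with negative $p$-coefficient for even $k$. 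Then all top-degree contributions to the relevant coefficient have the same sign, so none cancel.
\item The same invariant settles the odd-$r$ case of $P_r$: a constant cannot cancel a term with nonzero $p$-coefficient.
\end{itemize}

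The invariants agree with Appendix A. The values $L_3=-2$, $L_4=12-16p$, $L_5=-16$, $L_6=200-272p$ give leading $z$-coefficients $2$, $16p+4$, $16$, $272p-8$ for $P_4,\dots,P_7$, matching the tabulated series. So your route costs a slightly stronger inductive hypothesis, and in return it actually proves the equality of degrees, which the paper's argument only asserts.

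Two small slips.
\begin{itemize}
\item $R_2=-2(1-p)R_1=2-2p$, not $2(p-1)$. Your later $L_2=2-2p$ is the correct value, and its negative $p$-coefficient is exactly what your invariant needs.
\item The $k=1$ instance of the recursion comes from comparing the coefficients of $t^1$, not the constant terms; the constant terms only give $\alpha_1$.
\end{itemize}
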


\begin{proof}
We prove by induction on $k$ that
\[\alpha_k=\frac{R_k\left(p,\,j_{p,n}^{2}\right)}{2^k k!\,\left(j_{p,n}^{2}\right)^k}, \qquad \deg_{j_{p,n}^{2}} R_k=\left\lfloor \frac{k-1}{2}\right\rfloor.\]
For $k=1$ we clearly have that $\alpha_1=-1/\left(2j_{p,n}^{2}\right)=-1/\left(2^1 1!\,j_{p,n}^{2}\right),$ and thus in this case $R_1=-1$ and the claim holds. Now, assume that
for all $j\in\{1,2,\ldots,k\}$ we have that	$\alpha_j=R_j\left(p,j_{p,n}^{2}\right)/\left(2^j j!\,j_{p,n}^{2j}\right).$ Substituting this into the recursion, we first rewrite every term over the common denominator $2^k k!\,j_{p,n}^{2k}.$ Indeed, by using the shorthand notation $R_j=R_j\left(p,j_{p,n}^{2}\right),$ we arrive at
\[2(k-p)\alpha_k=\frac{2(k-p)R_k}{2^k k!\,j_{p,n}^{2k}},\]
\[j_{p,n}^{2}\,\alpha_j\alpha_{k-j}=\frac{R_jR_{k-j}}{2^k j!(k-j)!\,j_{p,n}^{2k-2}}=\frac{\binom{k}{j}\,j_{p,n}^{2}\,R_jR_{k-j}}{2^k k!\,j_{p,n}^{2k}},\]
and
\[\alpha_j\alpha_{k-1-j}=\frac{R_jR_{k-1-j}}{2^{k-1}j!(k-1-j)!\,j_{p,n}^{2k-2}}=\frac{2k\binom{k-1}{j}\,j_{p,n}^{2}\,R_jR_{k-1-j}}{2^k k!\,j_{p,n}^{2k}}.\]
Therefore the whole numerator in the recursion has the form
\[\frac{1}{{2^k k!\,j_{p,n}^{2k}}}\left[2(k-p)R_k+j_{p,n}^{2}\sum_{j=1}^{k-1}\binom{k}{j}R_jR_{k-j}+2k\,j_{p,n}^{2}\,\sum_{j=1}^{k-2}\binom{k-1}{j}R_jR_{k-1-j}\right].\]
Now, dividing the above expression by $2j_{p,n}^{2}(k+1),$ we clearly obtain that
\[\alpha_{k+1}=-\frac{1}{2^{k+1}(k+1)!\,j_{p,n}^{2k+2}}\left[2(k-p)R_k+j_{p,n}^{2}\sum_{j=1}^{k-1}\binom{k}{j}R_jR_{k-j}+2kj_{p,n}^{2}\sum_{j=1}^{k-2}\binom{k-1}{j}R_jR_{k-1-j}\right]\]
meaning that
\[R_{k+1}=-\left[2(k-p)R_k+j_{p,n}^{2}\sum_{j=1}^{k-1}\binom{k}{j}R_jR_{k-j}+2kj_{p,n}^{2}\sum_{j=1}^{k-2}\binom{k-1}{j}R_jR_{k-1-j}\right]\]
is again a polynomial, and the denominator fulfils the induction step. Moreover, since $\left\lfloor \tau\right\rfloor+\left\lfloor \mu\right\rfloor\leq \left\lfloor \tau+\mu\right\rfloor$ for every $\tau$ and $\mu$ real, we observe that
\[\deg_{j_{p,n}^{2}}\!\left(2(k-p)R_k\right)\le \left\lfloor\frac{k-1}{2}\right\rfloor,\]
\[\deg_{j_{p,n}^{2}}\!\left(j_{p,n}^{2}R_jR_{k-j}\right)=1+\left\lfloor\frac{j-1}{2}\right\rfloor+\left\lfloor\frac{k-j-1}{2}\right\rfloor\le\left\lfloor\frac k2\right\rfloor,\]
and similarly
\[\deg_{j_{p,n}^{2}}\!\left(j_{p,n}^{2}R_jR_{k-1-j}\right)\le \left\lfloor\frac{k-1}{2}\right\rfloor.\]
Consequently we arrive at
\[\deg_{j_{p,n}^{2}} R_{k+1}\le \left\lfloor\frac{3k-2}{2}\right\rfloor=k-1+\left\lfloor\frac k2\right\rfloor\leq \left\lfloor\frac k2\right\rfloor.\]
This bound is attained: if \(k=2m\), and we choose \(j\) odd in the middle sum, giving degree
\[1+\frac{j-1}{2}+\frac{2m-j-1}{2}=m,\]
and	if \(k=2m+1\), the first term \(2(k-p)R_k\) already has degree \(m\). Therefore
\[\deg_{j_{p,n}^{2}} R_{k+1}=\left\lfloor\frac k2\right\rfloor,\]
and the induction is complete. Moreover, it is clear that
\[\mathcal{S}_r(p,n)=-\frac12\left[j_{p,n}^{2}\frac{R_{r-2}\left(p,j_{p,n}^{2}\right)}{2^{r-2}(r-2)!\,j_{p,n}^{2r-4}}+j_{p,n}^{4}\frac{R_{r-1}\left(p,j_{p,n}^{2}\right)}
{2^{r-1}(r-1)!\,j_{p,n}^{2r-2}}\right],\]
and thus we have
\[\mathcal{S}_r(p,n)=\frac{P_r\left(p,j_{p,n}^{2}\right)}{2^r (r-1)!\,j_{p,n}^{2r-6}},\]
where
\[-P_r\left(p,j_{p,n}^{2}\right)=2(r-1)R_{r-2}\left(p,j_{p,n}^{2}\right)+R_{r-1}\left(p,j_{p,n}^{2}\right).\]
This in turn implies that
\[\deg_{j_{p,n}^{2}} P_r=\max\!\left\{\deg_{j_{p,n}^{2}} R_{r-2},\deg_{j_{p,n}^{2}} R_{r-1}\right\}=
\max\!\left\{\left\lfloor\frac{r-3}{2}\right\rfloor,\left\lfloor\frac{r-2}{2}\right\rfloor\right\}=\left\lfloor\frac{r-2}{2}\right\rfloor\]
and the numerator degree in \(j_{p,n}\) is indeed
\[2\left\lfloor\frac{r-2}{2}\right\rfloor=
\begin{cases}
r-2,& r\ \text{is even}\\
r-3,& r\ \text{is odd}
\end{cases}.\]
\end{proof}

\section{\bf Evaluation of some Calogero type infinite series involving zeros of Bessel functions}

Now, for $r,n\in\mathbb{N}$ let us consider the Calogero type infinite series
\[\mathcal T_r^{+}(p,n):=\sum_{m\ge1}\frac{1}{\left(j_{p,m}^{2}-j_{p+1,n}^{2}\right)^r},\quad p>-1\]
and
\[\mathcal T_r^{-}(p,n):=\sum_{m\ge1}\frac{1}{\left(j_{p,m}^{2}-j_{p-1,n}^{2}\right)^r},\quad p>0.\]
The first family of series is centered at a zero of $J_{p+1}$, while the second is centered at a zero of $J_{p-1}$. We note that by using some new differential recurrences for powers of Bessel function ratios, quite recently Urbanowicz evaluated for $r\in\{1,2,3,4,5,6\}$ the series $\mathcal T_r^{+}(p,n)$ (see \cite[Theorem 1]{Urbanowicz2026}), as well as for $r\in\{1,2,3,4,5,6\}$ the series $\mathcal T_r^{-}(p,n)$ (see \cite[Theorem 2]{Urbanowicz2026}). Moreover, both series were already studied by the same author by using a sophisticated analysis involving Lommel polynomials in the case when $r\in\{1,2,3\},$ see \cite{Urbanowicz2023} for more details. In what follows our aim is to show that with some slight modifications our idea, presented in the previous section, is working also well in the case of these Calogero type infinite series involving zeros of Bessel functions of the first kind. The idea here is to consider in this case the function
\begin{equation}\label{eq:W-def}
D_p(x):=\frac{J_{p+1}(\sqrt{x})}{2\sqrt{x}\,J_p(\sqrt{x})}=\sum_{m\ge1}\frac{1}{j_{p,m}^{2}-x}.
\end{equation}
The centers used below are not poles of $D_p$, since adjacent-order Bessel functions do not have common positive zeros, that is, their zeros have the so-called interlacing property, which is well-known in the literature. Thus, the following Taylor series expansions are local expansions of a meromorphic function at regular points.
More explicitly, we have
\begin{equation}\label{eq:upper-taylor}
D_p\!\left(t+j_{p+1,n}^{2}\right)
=
\sum_{k\ge0}\beta_k^{+}t^k,
\qquad
\beta_k^{+}=\mathcal T_{k+1}^{+}(p,n)=\frac{D_p^{(k)}\left(j_{p+1,n}^2\right)}{k!},
\end{equation}
and
\begin{equation}\label{eq:lower-taylor}
D_p\!\left(t+j_{p-1,n}^{2}\right)
=
\sum_{k\ge0}\beta_k^{-}t^k,
\qquad
\beta_k^{-}=\mathcal T_{k+1}^{-}(p,n)=\frac{D_p^{(k)}\left(j_{p-1,n}^2\right)}{k!}.
\end{equation}
Indeed, for every $s\in\mathbb N$ and for every such center $\xi\neq j_{p,m}^2$,
differentiating \eqref{eq:W-def} $s-1$ times gives
\begin{equation}\label{eq:derivative-series}
\frac{D_p^{(s-1)}(\xi)}{(s-1)!}
=
\sum_{m\ge1}\frac{1}{\left(j_{p,m}^{2}-\xi\right)^s}.
\end{equation}
Taking $\xi=j_{p+1,n}^2$ or $\xi=j_{p-1,n}^2$ gives \eqref{eq:upper-taylor} and \eqref{eq:lower-taylor}, respectively. The preceding observation shows that the series values are encoded in the derivatives of $D_p$ at the neighboring squared zeros. In principle, repeatedly differentiating the Mittag--Leffler expansion already produces the answer through \eqref{eq:derivative-series}.
For actual computation, however, it is more efficient to compute the Taylor series coefficients of $D_p$ recursively. By using the notations
\[
C_p(v):=\frac{1}{B_p(v)}=\frac{J_{p+1}(v)}{J_p(v)},
\qquad
D_p(x)=\frac{C_p(v)}{2v},
\qquad x=v^2,
\]
in view of the standard recurrence relations
\[
J_p'(v)=\frac{p}{v}J_p(v)-J_{p+1}(v),
\qquad
J_{p+1}'(v)=J_p(s)-\frac{p+1}{v}J_{p+1}(v),
\]
we obtain
\[
C_p'(v)
=
\frac{J_{p+1}'(v)J_p(v)-J_{p+1}(v)J_p'(v)}{J_p(v)^2}
=
1-\frac{2p+1}{v}C_p(v)+C_p^2(v).
\]
Since $C_p(v)=2vD_p(v^2)$, differentiation gives
\[
C_p'(v)=2D_p(x)+4xD_p'(x).
\]
Substituting $C_p(v)=2vD_p(x)$ into the preceding Riccati differential equation and using $x=v^2$ yields
\begin{equation}\label{eq:W-riccati}
xD_p'(x)-xD_p^2(x)+(p+1)D_p(x)-\frac14=0.
\end{equation}
This ordinary Riccati differential equation is the connection between the above infinite sums and a finite recursion for the Taylor series coefficients. Now, we fix a center $\zeta$ equal either to $j_{p+1,n}^2$ or to $j_{p-1,n}^2$, and write
\[
D_p(t+\zeta)=\sum_{k\ge0}\beta_k t^k.
\]
Substitution of $x=t+\zeta$ into \eqref{eq:W-riccati} gives
\begin{equation}\label{eq:W-riccati-taylor}
(t+\zeta)D_p'(t+\zeta)-(t+\zeta)\left[D_p(t+\zeta)\right]^2+(p+1)D_p(t+\zeta)-\frac14=0.
\end{equation}
On the other hand
\[
D_p'(t+\zeta)=\sum_{k\ge0}(k+1)\beta_{k+1}t^k,
\]
and by the Cauchy product formula we clearly have
\[
\left[D_p(t+\zeta)\right]^2
=
\sum_{k\ge0}\left[\sum_{j=0}^{k}\beta_j\beta_{k-j}\right]t^k.
\]
Comparing the constant terms in \eqref{eq:W-riccati-taylor} gives
\[
\zeta \beta_1-\zeta \beta_0^2+(p+1)\beta_0-\frac14=0.
\]
For $k\ge1$, the coefficient of $t^k$ in $(t+\zeta)D_p'(t+\zeta)$ is $\zeta(k+1)\beta_{k+1}+k\beta_k,$ while
the coefficient of $t^k$ in $(t+\zeta)[D_p(t+\zeta)]^2$ is
\[
\zeta\sum_{j=0}^{k}\beta_j\beta_{k-j}
+
\sum_{j=0}^{k-1}\beta_j\beta_{k-1-j}.
\]
Moreover, the coefficient of $t^k$ in $(p+1)D_p(t+\zeta)$ is $(p+1)\beta_k$. Therefore, we arrive at
\begin{equation}\label{eq:b-recursion-general}
\beta_{k+1}
=\frac{1}{\zeta(k+1)}\left[\zeta\sum_{j=0}^{k}\beta_j\beta_{k-j}
+\sum_{j=0}^{k-1}\beta_j\beta_{k-1-j}-(k+p+1)\beta_k\right],
\qquad k\ge1.
\end{equation}
The same formula also holds for $k=0$ if the second sum is interpreted as zero and the term $1/4$ is added to the numerator. For the polynomial structure it is useful to normalize the coefficients by $\gamma_k:=\zeta^{k+1}\beta_k.$ Equivalently, if $u=t/\zeta$ and
\[
\gamma_p(u):=\sum_{k\ge0}\gamma_k u^k,
\]
then $D_p(t+\zeta)=\zeta^{-1}\gamma_p(u)$.
After multiplying \eqref{eq:W-riccati-taylor} by $\zeta$, the equation becomes
\[
(1+u)\gamma_p'(u)-(1+u)\left[\gamma_p(u)\right]^2+(p+1)\gamma_p(u)-\frac{\zeta}{4}=0.
\]
Comparing the coefficient of $u^k$ gives the normalized recursion
\begin{equation}\label{eq:c-recursion-general}
\gamma_{k+1}
=\frac{1}{k+1}\left[\sum_{j=0}^{k}\gamma_j\gamma_{k-j}+\sum_{j=0}^{k-1}\gamma_j\gamma_{k-1-j}
-(k+p+1)\gamma_k+\frac{\zeta}{4}\delta_{k0}\right],
\qquad k\ge0,
\end{equation}
where the second sum is interpreted as zero for $k=0$. In the upper case $\zeta=j_{p+1,n}^{2}$ and we use the notation $\gamma_k^{+}:=\left(j_{p+1,n}^{2}\right)^{k+1}\beta_k^{+},$ while in the lower case $\zeta=j_{p-1,n}^{2}$ and we write $\gamma_k^{-}:=\left(j_{p-1,n}^{2}\right)^{k+1}\beta_k^{-}.$ Thus, for both signs and every $r\ge1$ we have
$\mathcal T_r^{\pm}(p,n)={\gamma_{r-1}^{\pm}}/{\zeta_\pm^r},$ where $\zeta_+=j_{p+1,n}^{2}$ and $\zeta_-=j_{p-1,n}^{2},$ with the corresponding initial values $\gamma_0^{+}=0$ and $\gamma_0^{-}=p.$

Since $J_{p+1}(j_{p+1,n})=0$, equation \eqref{eq:W-def} gives
$\mathcal T_1^{+}(p,n)=D_p\!\left(j_{p+1,n}^{2}\right)=0.$ Thus $\gamma_0^{+}=0$. Now, we define the two-variable polynomials $E_m(p,y)$ by $E_1(p,y)=\frac14$ and for $k\ge2$ as follows
$$E_{k+1}(p,y)=\frac{1}{k+1}\left[y\sum_{j=1}^{k-1}E_j(p,y)E_{k-j}(p,y)+y\sum_{j=1}^{k-2}E_j(p,y)E_{k-1-j}(p,y)
-(k+p+1)E_k(p,y)\right],$$
where empty sums are interpreted as zero. We note that the exact values of the series $\mathcal T_r^{+}(p,n)$ in the case of $r\in\{1,2,\ldots,10\}$ can be found in Appendix B.

\begin{theorem}\label{thm:upper-structure}
For every $k\ge1$ we have $\gamma_k^{+}=j_{p+1,n}^{2}E_k\!\left(p,j_{p+1,n}^{2}\right).$ Moreover, the degrees of the polynomial are
\[\deg_y E_k(p,y)=\left\lfloor\frac{k-1}{2}\right\rfloor, \quad \deg_p E_k(p,y)=k-1\]
and the leading coefficient of $p^{k-1}$ is exactly ${(-1)^{k-1}}/{(4k!)}.$ Consequently, for every integer $r\ge2$
\begin{equation}\label{eq:T-plus-structure}
\mathcal T_r^{+}(p,n)=\frac{E_{r-1}\!\left(p,j_{p+1,n}^{2}\right)}{\left(j_{p+1,n}^{2}\right)^{r-1}},
\end{equation}
and the numerator in \eqref{eq:T-plus-structure} satisfies
\[\deg_{j_{p+1,n}^{2}}\!\left(\mbox{numerator of}\ \ \mathcal T_r^{+}\right)=\left\lfloor\frac{r-2}{2}\right\rfloor, \quad \deg_p\!\left(\mbox{numerator of}\ \ \mathcal T_r^{+}\right)=r-2,\]
or equivalently
\[\deg_{j_{p+1,n}}\!\left(\mbox{numerator of}\ \ \mathcal T_r^{+}\right)=2\left\lfloor\frac{r-2}{2}\right\rfloor
=
\begin{cases}
r-2,& r\ \text{is even},\\
r-3,& r\ \text{is odd}.
\end{cases}\]
\end{theorem}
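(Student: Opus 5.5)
The plan is to verify directly that the normalized recursion \eqref{eq:c-recursion-general}, specialized to the upper center $\zeta=j_{p+1,n}^2$ and $\gamma_0^+=0$, collapses to the recursion defining $E_k$. Both structural claims then follow by induction on that recursion, exactly as in the proof of the previous theorem.

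\emph{Identification.} Put $y=\zeta=j_{p+1,n}^2$ and take $k=0$ in \eqref{eq:c-recursion-general}. Since $\gamma_0^+=0$, this gives $\gamma_1^+=y/4=yE_1$. For $k\ge1$, every term containing $\gamma_0^+$ vanishes. The two Cauchy sums therefore reduce to $\sum_{j=1}^{k-1}$ and $\sum_{j=1}^{k-2}$, giving
\[
\gamma_{k+1}^+=\frac{1}{k+1}\left[\sum_{j=1}^{k-1}\gamma_j^+\gamma_{k-j}^++\sum_{j=1}^{k-2}\gamma_j^+\gamma_{k-1-j}^+-(k+p+1)\gamma_k^+\right].
\]
Assume inductively that $\gamma_j^+=yE_j$ for $j\le k$. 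Each product then contributes $y^2E_jE_{k-j}$, so factoring out one $y$ yields exactly the defining recursion for $E_{k+1}$. For $k=1$ both sums are empty, and we get $E_2=-(p+2)/8$, consistent with the convention on empty sums. Combining with $\mathcal T_r^+=\gamma_{r-1}^+/\zeta^r$ gives \eqref{eq:T-plus-structure}.

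\emph{Degree in $p$ and leading coefficient.} We argue by induction, assuming $\deg_pE_j=j-1$ with leading coefficient $c_j$. In the first sum each product $E_jE_{k-j}$ has $p$-degree $k-2$, and in the second sum the products have $p$-degree $k-3$. The term $-(k+p+1)E_k$ has $p$-degree $k$ with leading coefficient $-c_k$. Hence $c_{k+1}=-c_k/(k+1)$, and with $c_1=1/4$ this gives $c_k=(-1)^{k-1}/(4\,k!)$. In particular the coefficient never vanishes, so the $p$-degree is exact.

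\emph{Degree in $y$.} The upper bound $\deg_yE_k\le\lfloor (k-1)/2\rfloor$ follows by the same floor-function bookkeeping as in the previous theorem, using $1+\lfloor (j-1)/2\rfloor+\lfloor (k-j-1)/2\rfloor\le\lfloor k/2\rfloor$ and its analogue for the second sum. Let $e_k(p)$ denote the coefficient of $y^{\lfloor (k-1)/2\rfloor}$ in $E_k$.
\begin{itemize}
\item For $k=2m$, only the first sum with $j$ odd reaches degree $m$, so $e_{2m+1}=\frac{1}{2m+1}\sum_{j\text{ odd}}e_je_{2m-j}$.
\item For $k=2m+1$, all three groups of terms reach degree $m$, so $e_{2m+2}$ is a signed combination in which cancellation is conceivable.
\end{itemize}
This exactness of the $y$-degree is the main obstacle. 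Note that the bookkeeping in the previous theorem does not settle it, and a naive sign argument fails because odd$\times$odd and odd$\times$even products enter with opposite signs.

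My first attempt is to show that $e_k(p)$ is not the zero polynomial by evaluating it at a convenient value of $p$, namely $p=-1/2$. There $D_{-1/2}(x)=\tan\sqrt{x}/(2\sqrt{x})$ and $\zeta=j_{1/2,n}^2=(n\pi)^2$. Moreover $e_k(-1/2)$ is the leading large-$\zeta$ coefficient of $\gamma_k^+/\zeta$, which can be read off from the Taylor expansion of $\tan\sqrt{\zeta+t}$ about $t=0$, i.e.\ about a zero of $\tan$, in the regime $\zeta\to\infty$. Concretely, I would expect $e_k(-1/2)$ to come from the leading-order behaviour $\tan(\sqrt\zeta+s)=\tan s$ with $s\approx t/(2\sqrt\zeta)$, so that $e_k(-1/2)$ is a rescaled Taylor coefficient of $\tan s$. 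Taylor coefficients of $\tan$ are nonzero in odd degree, and the even-degree indices should be handled by the next order in $1/\sqrt\zeta$. If this explicit route becomes messy, the fallback is to prove a sign pattern for $e_k(p)$ at a single special value of $p$ by induction, strong enough to rule out cancellation.

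Once exactness is established, the stated degrees of the numerator of $\mathcal T_r^+$ are immediate from $E_{r-1}$.
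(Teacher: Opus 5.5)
Your identification of $\gamma_k^+$ with $yE_k$ matches the paper's proof, as do the leading-coefficient recursion $c_{k+1}=-c_k/(k+1)$ and the upper bound on $\deg_yE_k$. The gap is the one nontrivial step: you never show that the top coefficient $e_{2m+2}(p)$ is nonzero. Phrases such as ``I would expect'' and ``should be handled by the next order in $1/\sqrt\zeta$'' leave open exactly the even case where you yourself fear cancellation, and the fallback sign pattern is never specified.

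The sign obstruction you describe is only apparent. The idea you are missing is that the odd$\times$odd part of the second sum collapses. For $m\ge1$ the odd-index recursion gives $\sum_{j\ \mathrm{odd}}e_je_{2m-j}=(2m+1)e_{2m+1}$. This cancels all of $-(2m+2+p)e_{2m+1}$ except $-(1+p)e_{2m+1}$, and the first sum pairs up, so
\[
e_{2m+2}=\frac{1}{2m+2}\Bigl[2\sum_{i=0}^{m-1}e_{2i+1}e_{2m-2i}-(1+p)e_{2m+1}\Bigr],\qquad m\ge1,\qquad e_2=-\frac{p+2}{8}.
\]
The $e_{2i+1}$ are positive constants. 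Induction therefore shows that $e_{2m+2}$ is linear in $p$ with strictly negative $p$-coefficient, which is the paper's argument via $\mu_s$. Equally, $e_{2m+2}(p_0)<0$ for every fixed $p_0\ge-1$, in particular your $p_0=-1/2$.

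Your tangent route can also be completed, and it would give a genuinely different proof with explicit values. Take $p=-1/2$ and $L=n\pi$. Then exactly
\[
\sum_k\gamma_k^+u^k=\frac{L}{2}\,\frac{\tan\bigl(L(\sqrt{1+u}-1)\bigr)}{\sqrt{1+u}}.
\]
Expanding $\tan z=\sum_{m\ \mathrm{odd}}T_mz^m$, with all $T_m>0$, shows that $\gamma_k^+/\zeta$ is an explicit polynomial in $\zeta=L^2$. It agrees with $E_k(-1/2,\zeta)$ at the infinitely many points $\zeta=(n\pi)^2$, hence identically. Its coefficient of $\zeta^{\lfloor (k-1)/2\rfloor}$ is:
\begin{itemize}
\item $T_k/2^{k+1}$ for odd $k$;
\item $-(k+1)T_{k-1}/2^{k+2}$ for even $k$, using $(\sqrt{1+u}-1)^{k-1}(1+u)^{-1/2}=2^{1-k}u^{k-1}\bigl(1-\frac{k+1}{4}u+\cdots\bigr)$.
\end{itemize}
Both are nonzero; for example, $k=2,4$ give $-3/16$ and $-5/192$, matching $E_2$ and Appendix B. Until one of these two computations is actually written out, the exactness of $\deg_yE_k$ is not proved. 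The same goes for the stated $j_{p+1,n}$-degree of the numerator of $\mathcal T_r^+$ for odd $r$, which depends on it.
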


\begin{proof}
Recall that $\gamma_0^{+}=0.$ By using \eqref{eq:c-recursion-general} with $\zeta=j_{p+1,n}^{2}$ gives $\gamma_1^{+}={j_{p+1,n}^{2}}/{4}.$ Now, assume that $\gamma_j^{+}=j_{p+1,n}^{2}E_j(p,j_{p+1,n}^{2})$ for $1\le j\le k.$ Since $\gamma_0^{+}=0$, all convolution terms containing $\gamma_0^{+}$ vanish in \eqref{eq:c-recursion-general}, and substitution gives
\[
\gamma_{k+1}^{+}
=\frac{j_{p+1,n}^{2}}{k+1}\left[
j_{p+1,n}^{2}\sum_{j=1}^{k-1}E_jE_{k-j}
+
j_{p+1,n}^{2}\sum_{j=1}^{k-2}E_jE_{k-1-j}
-(k+p+1)E_k\right]=j_{p+1,n}^{2}E_{k+1},
\]
where each $E_j$ is evaluated at $(p,j_{p+1,n}^{2}).$ We next prove the degree statements for $E_k.$ The $p$-degree is immediate: the convolution terms have $p$-degree at most $k-2$, while the only contribution to the top $p$-degree of $E_{k+1}$ comes from $-pE_k/(k+1).$ Thus, if the coefficient of the term $p^{k-1}$ in $E_k(p,y)$ is denoted by $\lambda_k,$ then clearly $\lambda_{k+1}=-{\lambda_k}/{(k+1)}$ with $\lambda_1=1/4$ and therefore $\lambda_k={(-1)^{k-1}}/{(4k!)}$ for each $k\geq1$ integer. For the degree in $y$, mathematical induction first gives the upper bound
\[
\deg_yE_k(p,y)\le\left\lfloor\frac{k-1}{2}\right\rfloor=:d_k.
\]
To prove that this bound is attained we denote the coefficient of $y^{d_k}$ in $E_k(p,y)$ simply as $\omega_k$ and split the leading coefficients into
$a_s:=\omega_{2s+1}$ and $b_s:=\omega_{2s+2}.$ For the odd subsequence, $a_0=1/4$ and the top $y$-degree part gives
$$
a_s=\frac{1}{2s+1}\sum_{j=0}^{s-1}a_ja_{s-1-j},
\qquad s\ge1.
$$
Hence $a_s>0$ for every $s\ge0$. For the even subsequence, let $\mu_s$ be the coefficient of $p$ in $b_s$. Since $E_2(p,y)=-({p+2})/{8}$ we have $\mu_0=-1/8,$ and taking the coefficient of $p$ in the top $y$-degree part gives
$$
\mu_s
=
\frac{1}{2s+2}\left[2\sum_{j=0}^{s-1}a_j\mu_{s-1-j}-a_s\right],
\qquad s\ge1.
$$
Because every $a_j>0$ and $\mu_0<0$, mathematical induction implies that $\mu_s<0$ for every $s\ge0$. This in turn implies that $b_s\ne0$ for every $s\ge0$. The top $y$-degree coefficient is therefore nonzero in both the odd and the even subsequences. Finally, observe that
\[\mathcal T_r^{+}(p,n)
=\beta_{r-1}^{+}
=\frac{\gamma_{r-1}^{+}}{\left(j_{p+1,n}^{2}\right)^r}
=\frac{E_{r-1}(p,j_{p+1,n}^{2})}{\left(j_{p+1,n}^{2}\right)^{r-1}}.\]
\end{proof}

Since $J_{p-1}(j_{p-1,n})=0$, the recurrence relation
\[J_{p+1}(x)=\frac{2p}{x}J_p(x)-J_{p-1}(x)\]
implies that
\[\frac{J_{p+1}(j_{p-1,n})}{J_p(j_{p-1,n})}=\frac{2p}{j_{p-1,n}}.\]
Therefore we clearly have
\begin{equation}\label{eq:T-minus-one}
\mathcal T_1^{-}(p,n)=D_p\!\left(j_{p-1,n}^{2}\right)=\frac{p}{j_{p-1,n}^{2}}.
\end{equation}
Now, define the two-variable polynomials as follows
\begin{equation}\label{eq:F-def-final}
F_k(p,y):=(-1)^k p+yE_k(-p,y),
\qquad k\ge1.
\end{equation}

The next theorem is the counterpart of Theorem \ref{thm:upper-structure}. We note that the exact values of the series $\mathcal T_r^{-}(p,n)$ in the case of $r\in\{1,2,\ldots,10\}$ can be found in Appendix C.

\begin{theorem}\label{thm:lower-structure}
For every $k\ge1$ we have $\gamma_k^{-}=F_k\!\left(p,j_{p-1,n}^{2}\right).$ Moreover, the degrees of the polynomial are
\[\deg_yF_k(p,y)=1+\left\lfloor\frac{k-1}{2}\right\rfloor
=\left\lfloor\frac{k+1}{2}\right\rfloor,\quad \deg_pF_k(p,y)=\max\{1,k-1\}.\]
Consequently, for every integer $r\ge2$,
\begin{equation}\label{eq:T-minus-structure}
\mathcal T_r^{-}(p,n)
=
\frac{F_{r-1}\!\left(p,j_{p-1,n}^{2}\right)}{\left(j_{p-1,n}^{2}\right)^r},
\end{equation}
and the numerator in \eqref{eq:T-minus-structure} satisfies
\[
\deg_{j_{p-1,n}^{2}}\!\left(\mbox{numerator of}\ \ \mathcal T_r^{-}\right)
=
\left\lfloor\frac r2\right\rfloor,\quad
\deg_p\!\left(\mbox{numerator of}\ \ \mathcal T_r^{-}\right)
=
\max\{1,r-2\},
\]
or equivalently
\[
\deg_{j_{p-1,n}}\!\left(\mbox{numerator of}\ \ \mathcal T_r^{-}\right)
=
2\left\lfloor\frac r2\right\rfloor
=
\begin{cases}
r,& r\ \text{is even},\\
r-1,& r\ \text{is odd}.
\end{cases}
\]
\end{theorem}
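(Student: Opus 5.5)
The plan is to reduce the lower case to the upper case by a shift that removes the nonzero initial value $\gamma_0^-=p$. In the normalized variable $u=t/\zeta$ with $\zeta=j_{p-1,n}^2$, the generating function $\gamma_p(u)=\sum_{k\ge0}\gamma_k^-u^k$ satisfies
\[(1+u)\gamma_p'(u)-(1+u)\left[\gamma_p(u)\right]^2+(p+1)\gamma_p(u)-\frac{\zeta}{4}=0,\qquad \gamma_p(0)=p,\]
where the initial value comes from \eqref{eq:T-minus-one}. Since $\sum_{k\ge0}(-1)^kp\,u^k=p/(1+u)$, the definition \eqref{eq:F-def-final} suggests writing $\gamma_p(u)=p/(1+u)+h(u)$. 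Substituting this, the terms $\pm p/(1+u)$ and $\pm p^2/(1+u)$ cancel, and the cross term $-2ph$ combines with $(p+1)h$. This should leave
\[(1+u)h'(u)-(1+u)\left[h(u)\right]^2+(1-p)h(u)-\frac{\zeta}{4}=0,\qquad h(0)=0.\]
This is exactly the same normalized Riccati equation with $p$ replaced by $-p$, so $1-p=(-p)+1$, and with vanishing initial value.

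Next I would use the fact that, for fixed $\zeta$, comparing coefficients gives the recursion \eqref{eq:c-recursion-general}, which determines all coefficients uniquely from the value at $u=0$. The inductive argument in the proof of Theorem \ref{thm:upper-structure} used only this recursion, $\gamma_0=0$, and an arbitrary value of $\zeta$; the specific value $j_{p+1,n}^2$ played no role. Rerunning it with $p\mapsto-p$ therefore gives $h_k=\zeta E_k(-p,\zeta)$ for $k\ge1$. Adding the coefficient $(-1)^kp$ of $p/(1+u)$ then yields $\gamma_k^-=F_k(p,\zeta)$. The formula \eqref{eq:T-minus-structure} follows from $\mathcal T_r^-(p,n)=\gamma_{r-1}^-/\zeta^r$.

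For the $y$-degree, the extra term $(-1)^kp$ has $y$-degree $0$. Hence $\deg_yF_k=1+\deg_yE_k(-p,y)$, provided the top $y$-coefficient of $E_k$ stays nonzero after $p\mapsto-p$; this is the step needing care. In the earlier proof, the odd-index leading coefficients $a_s$ are positive constants independent of $p$, so they survive. For the even-index leading coefficients $b_s$, the $p$-coefficient $\mu_s<0$ becomes $-\mu_s\neq0$, so $b_s(-p)\neq0$ as a polynomial.

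For the $p$-degree, for $k\ge3$ the term $yE_k(-p,y)$ has $p$-degree $k-1\ge2$ with leading coefficient $\pm y/(4k!)\ne0$, which dominates the linear term $(-1)^kp$. The small cases I would check by hand:
\begin{itemize}
\item $F_1=-p+y/4$, which has $p$-degree $1$;
\item $F_2=p-y(2-p)/8$, whose $p$-coefficient is $1+y/8\neq0$, so again $p$-degree $1$.
\end{itemize}
This gives $\max\{1,k-1\}$. Setting $k=r-1$ translates these into the numerator degrees $\lfloor r/2\rfloor$ in $j_{p-1,n}^2$ and $\max\{1,r-2\}$ in $p$, i.e.\ degree $2\lfloor r/2\rfloor$ in $j_{p-1,n}$. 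I expect the main obstacle to be the nonvanishing of the leading $y$-coefficients after the substitution $p\mapsto-p$, which hinges on the sign argument for $\mu_s$ carried over from Theorem \ref{thm:upper-structure}.
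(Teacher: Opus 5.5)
Your proof is correct and follows the same route as the paper. The paper also writes $\gamma_k^-=(-1)^kp+Q_k$ and checks coefficient by coefficient that the alternating and mixed convolution terms cancel, so that $Q_k$ obeys the upper-case recursion with $p\mapsto -p$ and $Q_0=0$; it then takes the degrees from Theorem~\ref{thm:upper-structure}. Your substitution $\gamma_p(u)=p/(1+u)+h(u)$ into the normalized Riccati equation gives that same cancellation in one line.

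The step you flag as the main obstacle is automatic: $p\mapsto -p$ is a ring automorphism of $\mathbb{Q}[p,y]$, so it cannot kill the nonzero leading $y$-coefficient of $E_k$, and the sign argument on $\mu_s$ is not needed.
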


\begin{proof}
First observe that equation \eqref{eq:T-minus-one} implies $\gamma_0^{-}=p$. Now, if we choose $y=j_{p-1,n}^{2}$ and define $Q_0:=0$ and $\gamma_k^{-}=(-1)^kp+Q_k$ for all $k\in\mathbb{N},$ then our aim is to prove that $Q_k=yE_k(-p,y)$ for all $k\in\mathbb{N},$ which is exactly $\gamma_k^{-}=F_k\!\left(p,j_{p-1,n}^{2}\right).$ For $k=0$ the recursive relation \eqref{eq:c-recursion-general} implies that
\[
\gamma_1^{-}
=(\gamma_0^{-})^2-(p+1)\gamma_0^{-}+\frac{y}{4}
=p^2-(p+1)p+\frac{y}{4}
=-p+\frac{y}{4}.
\]
Hence $Q_1=y/4=yE_1(-p,y)$. Now, assume now that $Q_j=yE_j(-p,y)$ holds true for all $1\le j\le k$. Substitute $\gamma_j^{-}=(-1)^jp+Q_j$ into \eqref{eq:c-recursion-general}.
The part containing only the alternating terms $(-1)^jp$ is
\begin{align*}
\sum_{j=0}^{k}&(-1)^jp(-1)^{k-j}p+\sum_{j=0}^{k-1}(-1)^jp(-1)^{k-1-j}p-(k+p+1)(-1)^kp\\
&=(k+1)(-1)^kp^2+k(-1)^{k-1}p^2-(k+p+1)(-1)^kp=(k+1)(-1)^{k+1}p.
\end{align*}
After division by $k+1$ this gives the required alternating term $(-1)^{k+1}p$. It remains to identify the non-alternating part. The mixed terms in the two convolutions cancel in pairs except for the two boundary occurrences of $Q_k$: for $1\le \ell\le k-1$ the coefficient of $Q_\ell$ is exactly $2(-1)^{m-\ell}p+2(-1)^{m-1-\ell}p=0,$ whereas the coefficient of $Q_k$ is $2p$. Consequently the recurrence relation for $Q_k$ is the following
\[Q_{k+1}=\frac{1}{k+1}\left[\sum_{j=1}^{k-1}Q_jQ_{k-j}+\sum_{j=1}^{k-2}Q_jQ_{k-1-j}-(k-p+1)Q_k\right],\qquad k\ge1,\]
where the empty sums are zero. Substituting the induction hypothesis $Q_j=yE_j(-p,y)$ gives
\[
Q_{k+1}
=\frac{y}{k+1}\left[y\sum_{j=1}^{k-1}E_j(-p,y)E_{k-j}(-p,y)
+y\sum_{j=1}^{k-2}E_j(-p,y)E_{k-1-j}(-p,y)
-(k-p+1)E_k(-p,y)\right],
\]
which is precisely $yE_{m+1}(-p,y)$ and thus the mathematical induction is complete.

The degree in $y$ follows from \eqref{eq:F-def-final} and the previous theorem
\[
\deg_y\bigl(yE_k(-p,y)\bigr)
=1+\left\lfloor\frac{m-1}{2}\right\rfloor,
\]
and the additional term $(-1)^kp$ has $y$-degree zero. For the $p$-degree, if $m=1$, then $F_1(p,y)=-p+{y}/{4},$ and thus $\deg_pF_1=1$. If $m=2$, then $E_2(p,y)=-(p+2)/8$, and therefore
\[
F_2(p,y)=p+yE_2(-p,y)=\left(1+\frac y8\right)p-\frac y4,
\]
that is $\deg_pF_2=1$. If $k\ge3$, then the top $p$-degree term can only come from $yE_k(-p,y)$ which is nonzero. Finally, observe that
\[
\mathcal T_r^{-}(p,n)
=\beta_{r-1}^{-}
=
\frac{\gamma_{r-1}^{-}}{\left(j_{p-1,n}^{2}\right)^r}
=
\frac{F_{r-1}(p,j_{p-1,n}^{2})}{\left(j_{p-1,n}^{2}\right)^r},
\]
which completes the proof.
\end{proof}
	
\section{\bf Concluding remarks}

In this paper we presented an algorithmic way to express recursively the values of some infinite series involving zeros of Bessel functions of the first kind. The main idea was to consider the Ricatti ordinary differential equation of some quotient of Bessel functions of the first kind and to combine this with the Taylor series of the corresponding ratio. In this way we obtained a recursive formula for the coefficients of the Taylor series from which we were able to compute with a suitable Python script the exact expressions of the infinite series in question. In this section we would like to mention briefly that the approach used in the previous section may be useful for the evaluation of more general Calogero type infinite series. Namely, if for $q,r,n\in\mathbb{N}$ and suitable $p$ real we consider the series
\[\mathcal{T}_{r}^{(-q)}(p,n):=\sum_{m\geq1}\frac{1}{\left(j_{p,m}^{2}-j_{p-q,n}^{2}\right)^{r}}\quad \mbox{and} \quad
\mathcal{T}_{r}^{(+q)}(p,n):=\sum_{m\geq1}\frac{1}{\left(j_{p,m}^{2}-j_{p+q,n}^{2}\right)^{r}},\]
then by choosing in \eqref{eq:derivative-series} the values $\zeta=j_{p-q,n}^2$ or $\zeta=j_{p+q,n}^2,$ we arrive at
\[\mathcal{T}_{r}^{(\pm q)}(p,n):=\sum_{m\geq1}\frac{1}{\left(j_{p,m}^{2}-j_{p\pm q,n}^{2}\right)^{r}}=\frac{D_p^{(r-1)}\left(j_{p\pm q,n}^{2}\right)}{(r-1)!},\]
where we recall that
\[D_p(x)=\frac{J_{p+1}(\sqrt{x})}{2\sqrt{x}J_p(\sqrt{x})},\qquad D_p(t+\zeta)=\sum_{k\geq0}\beta_kt^k, \qquad \beta_k=\frac{D_p^{(k)}(\zeta)}{k!}\]
and $\beta_k$ satisfies the recurrence relation \eqref{eq:b-recursion-general}. Thus, once we know the initial value $\beta_0^{(\pm q)}=D_p\left(j_{p\pm q,n}^{2}\right),$ then the same approach is working also in this general case, that is the recursion \eqref{eq:b-recursion-general} generates every desired sum, with of course more complicated computations. Now, for the initial values we consider first for $k\in\{1,2,\ldots,q\}$ the expression
\[\rho_k(x)=\frac{J_{p-q+k+1}(x)}{J_{p-q+k}(x)},\]
which in view of the three-term recurrence relation \eqref{eqthreeterm} satisfies for $k\in\{2,3,\ldots,q\}$ the relation
\[\rho_k(x)=\frac{2(p-q+k)}{x}-\frac{1}{\rho_{k-1}(x)}.\]
On the other hand, by using again \eqref{eqthreeterm}, we have
\[\rho_1(j_{p\pm q,n})=\frac{J_{p\pm q+2}(j_{p\pm q,n})}{J_{p\pm q+1}(j_{p\pm q,n})}=\frac{2(p\pm q+1)}{j_{p\pm q,n}}\]
and this in turn implies that
\[\beta_0^{(\pm q)}=D_p\left(j_{p\pm q,n}^{2}\right)=\frac{\rho_q(j_{p\pm q,n})}{2j_{p\pm q,n}}.\]
In conclusion it is possible to apply the approach of the previous section for the more general Calogero type series, however we need first to find recursively the initial value $\beta_0^{(\pm q)}.$

\section*{\bf Appendix A: the series $\mathcal{S}_r(p,n)$ up to degree $r=15$}	

\begin{flalign*}
&\sum_{m\geq1}\frac{j_{p,n}^{2}j_{p+1,m}^{2}}{(j_{p+1,m}^{2}-j_{p,n}^{2})^{2}}
= \frac{j_{p,n}^{2}}{4}&&
\end{flalign*}
	
\begin{flalign*}
&\sum_{m\geq1}\frac{j_{p,n}^{2}j_{p+1,m}^{2}}{(j_{p+1,m}^{2}-j_{p,n}^{2})^{3}}
= \frac{p+1}{8}&&
\end{flalign*}
	
\begin{flalign*}
&\sum_{m\geq1}\frac{j_{p,n}^{2}j_{p+1,m}^{2}}{(j_{p+1,m}^{2}-j_{p,n}^{2})^{4}}= \frac{j_{p,n}^{2} {}+ 2 p^{2} {}- 2}{48 j_{p,n}^{2}}&&
\end{flalign*}
	
\begin{flalign*}
&\sum_{m\geq1}\frac{j_{p,n}^{2}j_{p+1,m}^{2}}{(j_{p+1,m}^{2}-j_{p,n}^{2})^{5}}= \frac{4 j_{p,n}^{2} p {}+ j_{p,n}^{2} {}+ 2 p^{3} {}- 4 p^{2} {}- 2 p {}+ 4}{192 j_{p,n}^{4}}&&
\end{flalign*}
	
\begin{flalign*}
\sum_{m\geq1}\frac{j_{p,n}^{2}j_{p+1,m}^{2}}{(j_{p+1,m}^{2}-j_{p,n}^{2})^{6}}&= \frac{1}{960 j_{p,n}^{6}}\left(2 j_{p,n}^{4} {}+ 11 j_{p,n}^{2} p^{2} {}- 5 	j_{p,n}^{2} p {}- 4 j_{p,n}^{2} {}+ 2 p^{4} {}- 10 p^{3}\right.\\
		&\left.\quad {}+ 10 p^{2} {}+ 10 p {}- 12\right)&&
\end{flalign*}
	
\begin{flalign*}
\sum_{m\geq1}\frac{j_{p,n}^{2}j_{p+1,m}^{2}}{(j_{p+1,m}^{2}-j_{p,n}^{2})^{7}}&= \frac{1}{11520 j_{p,n}^{8}}\left(34 j_{p,n}^{4} p {}- j_{p,n}^{4} {}+ 52 j_{p,n}^{2} p^{3} {}- 98 j_{p,n}^{2} p^{2} {}+ 2 j_{p,n}^{2} p\right.\\
		&\left.\quad {}+ 32 j_{p,n}^{2} {}+ 4 p^{5} {}- 36 p^{4} {}+ 100 p^{3} {}- 60 p^{2} {}- 104 p {}+ 96\right)&&
\end{flalign*}
	
\begin{flalign*}
\sum_{m\geq1}\frac{j_{p,n}^{2}j_{p+1,m}^{2}}{(j_{p+1,m}^{2}-j_{p,n}^{2})^{8}}&= \frac{1}{80640 j_{p,n}^{10}}\left(17 j_{p,n}^{6} {}+ 180 j_{p,n}^{4} p^{2} {}- 126 j_{p,n}^{4} p {}- 24 j_{p,n}^{4} {}+ 114 j_{p,n}^{2} p^{4}\right.\\
		&\quad {}- 448 j_{p,n}^{2} p^{3} {}+ 422 j_{p,n}^{2} p^{2} {}+ 112 j_{p,n}^{2} p {}- 152 j_{p,n}^{2} {}+ 4 p^{6}\\
		&\quad {}- 56 p^{5} {}+ 280 p^{4} {}- 560 p^{3} {}+ 196 p^{2} {}+ 616 p {}- 480)&&
\end{flalign*}
	
\begin{flalign*}
\sum_{m\geq1}\frac{j_{p,n}^{2}j_{p+1,m}^{2}}{(j_{p+1,m}^{2}-j_{p,n}^{2})^{9}}&= \frac{1}{645120 j_{p,n}^{12}}\left(248 j_{p,n}^{6} p {}- 46 j_{p,n}^{6} {}+ 768 j_{p,n}^{4} p^{3} {}- 1458 j_{p,n}^{4} p^{2}\right.\\
		&\quad {}+ 375 j_{p,n}^{4} p {}+ 207 j_{p,n}^{4} {}+ 240 j_{p,n}^{2} p^{5} {}- 1566 j_{p,n}^{2} p^{4}\\
		&\quad {}+ 3320 j_{p,n}^{2} p^{3} {}- 1930 j_{p,n}^{2} p^{2} {}- 1160 j_{p,n}^{2} p {}+ 856 j_{p,n}^{2} {}+ 4 p^{7}\\
		&\quad {}- 80 p^{6} {}+ 616 p^{5} {}- 2240 p^{4} {}+ 3556 p^{3} {}- 560 p^{2} {}- 4176 p {}+ 2880)&&
\end{flalign*}
	
\begin{flalign*}
\sum_{m\geq1}\frac{j_{p,n}^{2}j_{p+1,m}^{2}}{(j_{p+1,m}^{2}-j_{p,n}^{2})^{10}}&= \frac{1}{11612160 j_{p,n}^{14}}\left(248 j_{p,n}^{8} {}+ 4288 j_{p,n}^{6} p^{2} {}- 3648 j_{p,n}^{6} p {}+ 11 j_{p,n}^{6}\right.\\
		&\quad {}+ 5808 j_{p,n}^{4} p^{4} {}- 20640 j_{p,n}^{4} p^{3} {}+ 19890 j_{p,n}^{4} p^{2} {}- 888 j_{p,n}^{4} p\\
		&\quad {}- 3162 j_{p,n}^{4} {}+ 988 j_{p,n}^{2} p^{6} {}- 9516 j_{p,n}^{2} p^{5} {}+ 33772 j_{p,n}^{2} p^{4}\\
		&\quad {}- 50340 j_{p,n}^{2} p^{3} {}+ 18232 j_{p,n}^{2} p^{2} {}+ 20976 j_{p,n}^{2} p {}- 11232 j_{p,n}^{2}\\
		&\quad {}+ 8 p^{8} {}- 216 p^{7} {}+ 2352 p^{6} {}- 13104 p^{5} {}+ 38472 p^{4} {}- 50904 p^{3} {}- 512 p^{2}\\
		&\quad {}+ 64224 p {}- 40320)&&
\end{flalign*}
	
\begin{flalign*}
\sum_{m\geq1}\frac{j_{p,n}^{2}j_{p+1,m}^{2}}{(j_{p+1,m}^{2}-j_{p,n}^{2})^{11}}&= \frac{1}{116121600 j_{p,n}^{16}}\left(5528 j_{p,n}^{8} p {}- 1576 j_{p,n}^{8} {}+ 28768 j_{p,n}^{6} p^{3} {}- 55472 j_{p,n}^{6} p^{2}\right.\\
		&\quad {}+ 21248 j_{p,n}^{6} p {}+ 2843 j_{p,n}^{6} {}+ 20388 j_{p,n}^{4} p^{5} {}- 114558 j_{p,n}^{4} p^{4}\\
		&\quad {}+ 216474 j_{p,n}^{4} p^{3} {}- 134634 j_{p,n}^{4} p^{2} {}- 18774 j_{p,n}^{4} p {}+ 25344 j_{p,n}^{4}\\
		&\quad {}+ 2008 j_{p,n}^{2} p^{7} {}- 26564 j_{p,n}^{2} p^{6} {}+ 138628 j_{p,n}^{2} p^{5}\\
		&\quad {}- 351764 j_{p,n}^{2} p^{4} {}+ 407092 j_{p,n}^{2} p^{3} {}- 79016 j_{p,n}^{2} p^{2} {}- 194928 j_{p,n}^{2} p\\
		&\quad {}+ 84384 j_{p,n}^{2} {}+ 8 p^{9} {}- 280 p^{8} {}+ 4080 p^{7} {}- 31920 p^{6} {}+ 143304 p^{5}\\
		&\quad {}- 358680 p^{4} {}+ 406720 p^{3} {}+ 68320 p^{2} {}- 554112 p {}+ 322560)&&
\end{flalign*}
	
\begin{flalign*}
\sum_{m\geq1}\frac{j_{p,n}^{2}j_{p+1,m}^{2}}{(j_{p+1,m}^{2}-j_{p,n}^{2})^{12}}&= \frac{1}{1277337600 j_{p,n}^{18}}\left(2764 j_{p,n}^{10} {}+ 70792 j_{p,n}^{8} p^{2} {}- 67408 j_{p,n}^{8} p {}+ 6568 j_{p,n}^{8}\right.\\
		&\quad {}+ 166042 j_{p,n}^{6} p^{4} {}- 559724 j_{p,n}^{6} p^{3} {}+ 552983 j_{p,n}^{6} p^{2}\\
		&\quad {}- 106579 j_{p,n}^{6} p {}- 39414 j_{p,n}^{6} {}+ 68192 j_{p,n}^{4} p^{6} {}- 549780 j_{p,n}^{4} p^{5}\\
		&\quad {}+ 1656230 j_{p,n}^{4} p^{4} {}- 2179650 j_{p,n}^{4} p^{3} {}+ 926978 j_{p,n}^{4} p^{2}\\
		&\quad {}+ 337590 j_{p,n}^{4} p {}- 220680 j_{p,n}^{4} {}+ 4052 j_{p,n}^{2} p^{8} {}- 70136 j_{p,n}^{2} p^{7}\\
		&\quad {}+ 499624 j_{p,n}^{2} p^{6} {}- 1864016 j_{p,n}^{2} p^{5} {}+ 3763060 j_{p,n}^{2} p^{4}\\
		&\quad {}- 3547544 j_{p,n}^{2} p^{3} {}+ 158096 j_{p,n}^{2} p^{2} {}+ 1933536 j_{p,n}^{2} p {}- 715392 j_{p,n}^{2}\\
		&\quad {}+ 8 p^{10} {}- 352 p^{9} {}+ 6600 p^{8} {}- 68640 p^{7} {}+ 430584 p^{6} {}- 1648416 p^{5} {}+ 3634840 p^{4}\\
		&\quad {}- 3592160 p^{3} {}- 1168992 p^{2} {}+ 5309568 p {}- 2903040)&&
\end{flalign*}
	
\begin{flalign*}
\sum_{m\geq1}\frac{j_{p,n}^{2}j_{p+1,m}^{2}}{(j_{p+1,m}^{2}-j_{p,n}^{2})^{13}}&= \frac{1}{30656102400 j_{p,n}^{20}}\left(174752 j_{p,n}^{10} p {}- 61880 j_{p,n}^{10} {}+ 1372088 j_{p,n}^{8} p^{3} {}- 2684932 j_{p,n}^{8} p^{2}\right.\\
		&\quad {}+ 1231870 j_{p,n}^{8} p {}- 4211 j_{p,n}^{8} {}+ 1737488 j_{p,n}^{6} p^{5} {}- 8939640 j_{p,n}^{6} p^{4}\\
		&\quad {}+ 15893948 j_{p,n}^{6} p^{3} {}- 10402014 j_{p,n}^{6} p^{2} {}+ 627350 j_{p,n}^{6} p {}+ 919356 j_{p,n}^{6}\\
		&\quad {}+ 441568 j_{p,n}^{4} p^{7} {}- 4796368 j_{p,n}^{4} p^{6} {}+ 20709664 j_{p,n}^{4} p^{5}\\
		&\quad {}- 44099608 j_{p,n}^{4} p^{4} {}+ 44768872 j_{p,n}^{4} p^{3} {}- 12649192 j_{p,n}^{4} p^{2}\\
		&\quad {}- 9182664 j_{p,n}^{4} p {}+ 4202928 j_{p,n}^{4} {}+ 16288 j_{p,n}^{2} p^{9} {}- 356216 j_{p,n}^{2} p^{8}\\
		&\quad {}+ 3299568 j_{p,n}^{2} p^{7} {}- 16738032 j_{p,n}^{2} p^{6} {}+ 49812672 j_{p,n}^{2} p^{5}\\
		&\quad {}- 84590520 j_{p,n}^{2} p^{4} {}+ 66607472 j_{p,n}^{2} p^{3} {}+ 6877472 j_{p,n}^{2} p^{2}\\
		&\quad {}- 41353920 j_{p,n}^{2} p {}+ 13522176 j_{p,n}^{2} {}+ 16 p^{11} {}- 864 p^{10} {}+ 20240 p^{9}\\
		&\quad {}- 269280 p^{8} {}+ 2233968 p^{7} {}- 11908512 p^{6} {}+ 40238000 p^{5} {}- 79881120 p^{4} {}+ 69505216 p^{3}\\
		&\quad {}+ 33998976 p^{2} {}- 111997440 p {}+ 58060800)&&
\end{flalign*}

\begin{flalign*}
\sum_{m\geq1}\frac{j_{p,n}^{2}j_{p+1,m}^{2}}{(j_{p+1,m}^{2}-j_{p,n}^{2})^{14}}&= \frac{1}{398529331200 j_{p,n}^{22}}\left(87376 j_{p,n}^{12} {}+ 3106644 j_{p,n}^{10} p^{2} {}- 3184428 j_{p,n}^{10} p {}+ 487719 j_{p,n}^{10}\right.\\
		&\quad {}+ 11204160 j_{p,n}^{8} p^{4} {}- 36591464 j_{p,n}^{8} p^{3} {}+ 36855856 j_{p,n}^{8} p^{2}\\
		&\quad {}- 10098790 j_{p,n}^{8} p {}- 918952 j_{p,n}^{8} {}+ 8495440 j_{p,n}^{6} p^{6} {}- 61388080 j_{p,n}^{6} p^{5}\\
		&\quad {}+ 167712784 j_{p,n}^{6} p^{4} {}- 206789128 j_{p,n}^{6} p^{3} {}+ 97315576 j_{p,n}^{6} p^{2}\\
		&\quad {}+ 6480032 j_{p,n}^{6} p {}- 10653168 j_{p,n}^{6} {}+ 1398000 j_{p,n}^{4} p^{8}\\
		&\quad {}- 19577376 j_{p,n}^{4} p^{7} {}+ 113466552 j_{p,n}^{4} p^{6} {}- 346916232 j_{p,n}^{4} p^{5}\\
		&\quad {}+ 582102816 j_{p,n}^{4} p^{4} {}- 479160864 j_{p,n}^{4} p^{3} {}+ 77424168 j_{p,n}^{4} p^{2}\\
		&\quad {}+ 120291912 j_{p,n}^{4} p {}- 43706736 j_{p,n}^{4} {}+ 32664 j_{p,n}^{2} p^{10} {}- 879112 j_{p,n}^{2} p^{9}\\
		&\quad {}+ 10230800 j_{p,n}^{2} p^{8} {}- 67170480 j_{p,n}^{2} p^{7} {}+ 271180632 j_{p,n}^{2} p^{6}\\
		&\quad {}- 682126536 j_{p,n}^{2} p^{5} {}+ 1007581440 j_{p,n}^{2} p^{4} {}- 670641920 j_{p,n}^{2} p^{3}\\
		&\quad {}- 175479296 j_{p,n}^{2} p^{2} {}+ 477330048 j_{p,n}^{2} p {}- 141027840 j_{p,n}^{2} {}+ 16 p^{12}\\
		&\quad {}- 1040 p^{11} {}+ 29744 p^{10} {}- 491920 p^{9} {}+ 5196048 p^{8} {}- 36482160 p^{7} {}+ 171231632 p^{6}\\
		&\quad {}- 522499120 p^{5} {}+ 948197536 p^{4} {}- 730558400 p^{3} {}- 485986176 p^{2} {}+ 1290032640 p {}- 638668800)&&
	\end{flalign*}
	
\begin{flalign*}
\sum_{m\geq1}\frac{j_{p,n}^{2}j_{p+1,m}^{2}}{(j_{p+1,m}^{2}-j_{p,n}^{2})^{15}}&= \frac{1}{5579410636800 j_{p,n}^{24}}\left(3718276 j_{p,n}^{12} p {}- 1504838 j_{p,n}^{12} {}+ 41048184 j_{p,n}^{10} p^{3}\right.\\
		&\quad {}- 81333348 j_{p,n}^{10} p^{2} {}+ 41375904 j_{p,n}^{10} p {}- 2894868 j_{p,n}^{10}\\
		&\quad {}+ 81507120 j_{p,n}^{8} p^{5} {}- 395654760 j_{p,n}^{8} p^{4} {}+ 677818984 j_{p,n}^{8} p^{3}\\
		&\quad {}- 458742962 j_{p,n}^{8} p^{2} {}+ 73913273 j_{p,n}^{8} p {}+ 18345215 j_{p,n}^{8}\\
		&\quad {}+ 39573760 j_{p,n}^{6} p^{7} {}- 379713680 j_{p,n}^{6} p^{6} {}+ 1456871200 j_{p,n}^{6} p^{5}\\
		&\quad {}- 2792390648 j_{p,n}^{6} p^{4} {}+ 2652739720 j_{p,n}^{6} p^{3} {}- 912571952 j_{p,n}^{6} p^{2}\\
		&\quad {}- 202425840 j_{p,n}^{6} p {}+ 128296656 j_{p,n}^{6} {}+ 4357320 j_{p,n}^{4} p^{9}\\
		&\quad {}- 76155060 j_{p,n}^{4} p^{8} {}+ 566612856 j_{p,n}^{4} p^{7} {}- 2320772424 j_{p,n}^{4} p^{6}\\
		&\quad {}+ 5614074072 j_{p,n}^{4} p^{5} {}- 7856930436 j_{p,n}^{4} p^{4} {}+ 5382434424 j_{p,n}^{4} p^{3}\\
		&\quad {}- 249841056 j_{p,n}^{4} p^{2} {}- 1610379792 j_{p,n}^{4} p {}+ 494345376 j_{p,n}^{4}\\
		&\quad {}+ 65424 j_{p,n}^{2} p^{11} {}- 2122632 j_{p,n}^{2} p^{10} {}+ 30249032 j_{p,n}^{2} p^{9}\\
		&\quad {}- 248398480 j_{p,n}^{2} p^{8} {}+ 1293390912 j_{p,n}^{2} p^{7} {}- 4408770696 j_{p,n}^{2} p^{6}\\
		&\quad {}+ 9710360616 j_{p,n}^{2} p^{5} {}- 12752398560 j_{p,n}^{2} p^{4} {}+ 7195296064 j_{p,n}^{2} p^{3}\\
		&\quad {}+ 3188614528 j_{p,n}^{2} p^{2} {}- 5934987648 j_{p,n}^{2} p {}+ 1609367040 j_{p,n}^{2} {}+ 16 p^{13}\\
		&\quad {}- 1232 p^{12} {}+ 42224 p^{11} {}- 848848 p^{10} {}+ 11099088 p^{9} {}- 98834736 p^{8} {}+ 609017552 p^{7}\\
		&\quad {}- 2577278704 p^{6} {}+ 7218186976 p^{5} {}- 12108928832 p^{4} {}+ 8280714624 p^{3} {}+ 7121866752 p^{2}\\
		&\quad {}- 16119060480 p {}+ 7664025600)&&
\end{flalign*}
	
\section*{\bf Appendix B: the series $\Sigma_r^{(+1)}(p,n)$ up to degree $r=10$}

\begin{flalign*}
\sum_{m\geq1}
\frac{1}{\left(j_{p,m}^2-j_{p+1,n}^2\right)^{1}}
= 0&&
\end{flalign*}

\begin{flalign*}
\sum_{m\geq1}
\frac{1}{\left(j_{p,m}^2-j_{p+1,n}^2\right)^{2}}
= \frac{1}{4 j_{p+1,n}^{2}}&&
\end{flalign*}

\begin{flalign*}
\sum_{m\geq1}
\frac{1}{\left(j_{p,m}^2-j_{p+1,n}^2\right)^{3}}
= - \frac{p + 2}{8 j_{p+1,n}^{4}}&&
\end{flalign*}

\begin{flalign*}
\sum_{m\geq1}
\frac{1}{\left(j_{p,m}^2-j_{p+1,n}^2\right)^{4}}
= \frac{j_{p+1,n}^{2} + 2 p^{2} + 10 p + 12}{48 j_{p+1,n}^{6}}&&
\end{flalign*}

\begin{flalign*}
\sum_{m\geq1}
\frac{1}{\left(j_{p,m}^2-j_{p+1,n}^2\right)^{5}}
&= - \frac{1}{192 j_{p+1,n}^{8}}\left(4 j_{p+1,n}^{2} p + 7 j_{p+1,n}^{2} + 2 p^{3} + 18 p^{2}\right.\\
&\left.\quad {}+ 52 p + 48\right)&&
\end{flalign*}

\begin{flalign*}
\sum_{m\geq1}
\frac{1}{\left(j_{p,m}^2-j_{p+1,n}^2\right)^{6}}
&= \frac{1}{960 j_{p+1,n}^{10}}\left(2 j_{p+1,n}^{4} + 11 j_{p+1,n}^{2} p^{2} + 47 j_{p+1,n}^{2} p\right.\\
&\quad {}+ 47 j_{p+1,n}^{2} + 2 p^{4} + 28 p^{3} + 142 p^{2} + 308 p + 240)&&
\end{flalign*}

\begin{flalign*}
\sum_{m\geq1}
\frac{1}{\left(j_{p,m}^2-j_{p+1,n}^2\right)^{7}}
&= - \frac{1}{11520 j_{p+1,n}^{12}}\left(34 j_{p+1,n}^{4} p + 59 j_{p+1,n}^{4} + 52 j_{p+1,n}^{2} p^{3}\right.\\
&\quad {}+ 386 j_{p+1,n}^{2} p^{2} + 918 j_{p+1,n}^{2} p + 684 j_{p+1,n}^{2}\\
&\quad {}+ 4 p^{5} + 80 p^{4} + 620 p^{3} + 2320 p^{2} + 4176 p + 2880)&&
\end{flalign*}

\begin{flalign*}
\sum_{m\geq1}
\frac{1}{\left(j_{p,m}^2-j_{p+1,n}^2\right)^{8}}
&= \frac{1}{80640 j_{p+1,n}^{14}}\left(17 j_{p+1,n}^{6} + 180 j_{p+1,n}^{4} p^{2} + 724 j_{p+1,n}^{4} p\right.\\
&\quad {}+ 695 j_{p+1,n}^{4} + 114 j_{p+1,n}^{2} p^{4} + 1268 j_{p+1,n}^{2} p^{3}\\
&\quad {}+ 5152 j_{p+1,n}^{2} p^{2} + 8958 j_{p+1,n}^{2} p + 5508 j_{p+1,n}^{2}\\
&\quad {}+ 4 p^{6} + 108 p^{5} + 1180 p^{4} + 6660 p^{3}\\
&\quad {}+ 20416 p^{2} + 32112 p + 20160)&&
\end{flalign*}

\begin{flalign*}
\sum_{m\geq1}
\frac{1}{\left(j_{p,m}^2-j_{p+1,n}^2\right)^{9}}
&= - \frac{1}{645120 j_{p+1,n}^{16}}\left(248 j_{p+1,n}^{6} p + 430 j_{p+1,n}^{6} + 768 j_{p+1,n}^{4} p^{3}\right.\\
&\quad {}+ 5202 j_{p+1,n}^{4} p^{2} + 11387 j_{p+1,n}^{4} p + 7954 j_{p+1,n}^{4}\\
&\quad {}+ 240 j_{p+1,n}^{2} p^{5} + 3678 j_{p+1,n}^{2} p^{4} + 22128 j_{p+1,n}^{2} p^{3}\\
&\quad {}+ 64902 j_{p+1,n}^{2} p^{2} + 91788 j_{p+1,n}^{2} p + 49104 j_{p+1,n}^{2}\\
&\quad {}+ 4 p^{7} + 140 p^{6} + 2044 p^{5} + 16100 p^{4}\\
&\quad {}+ 73696 p^{3} + 195440 p^{2} + 277056 p + 161280)&&
\end{flalign*}

\begin{flalign*}
\sum_{m\geq1}
\frac{1}{\left(j_{p,m}^2-j_{p+1,n}^2\right)^{10}}
&= \frac{1}{11612160 j_{p+1,n}^{18}}\left(248 j_{p+1,n}^{8} + 4288 j_{p+1,n}^{6} p^{2} + 16688 j_{p+1,n}^{6} p\right.\\
&\quad {}+ 15687 j_{p+1,n}^{6} + 5808 j_{p+1,n}^{4} p^{4} + 57696 j_{p+1,n}^{4} p^{3}\\
&\quad {}+ 210294 j_{p+1,n}^{4} p^{2} + 330786 j_{p+1,n}^{4} p + 187236 j_{p+1,n}^{4}\\
&\quad {}+ 988 j_{p+1,n}^{2} p^{6} + 19764 j_{p+1,n}^{2} p^{5} + 162376 j_{p+1,n}^{2} p^{4}\\
&\quad {}+ 698652 j_{p+1,n}^{2} p^{3} + 1650100 j_{p+1,n}^{2} p^{2}\\
&\quad {}+ 2007288 j_{p+1,n}^{2} p + 964512 j_{p+1,n}^{2} + 8 p^{8} + 352 p^{7}\\
&\quad {}+ 6608 p^{6} + 68992 p^{5} + 437192 p^{4} + 1717408 p^{3}\\
&\quad {}+ 4072032 p^{2} + 5309568 p + 2903040)&&
\end{flalign*}

\section*{\bf Appendix C: the series $\Sigma_r^{(-1)}(p,n)$ up to degree $r=10$}

\begin{flalign*}
\sum_{m\geq1}
\frac{1}{\left(j_{p,m}^2-j_{p-1,n}^2\right)^{1}}
= \frac{p}{j_{p-1,n}^{2}}&&
\end{flalign*}

\begin{flalign*}
\sum_{m\geq1}
\frac{1}{\left(j_{p,m}^2-j_{p-1,n}^2\right)^{2}}
= - \frac{- j_{p-1,n}^{2} + 4 p}{4 j_{p-1,n}^{4}}&&
\end{flalign*}

\begin{flalign*}
\sum_{m\geq1}
\frac{1}{\left(j_{p,m}^2-j_{p-1,n}^2\right)^{3}}
= \frac{j_{p-1,n}^{2} p - 2 j_{p-1,n}^{2} + 8 p}{8 j_{p-1,n}^{6}}&&
\end{flalign*}

\begin{flalign*}
\sum_{m\geq1}
\frac{1}{\left(j_{p,m}^2-j_{p-1,n}^2\right)^{4}}
&= \frac{j_{p-1,n}^{4} + 2 j_{p-1,n}^{2} p^{2} - 10 j_{p-1,n}^{2} p + 12 j_{p-1,n}^{2} - 48 p}{48 j_{p-1,n}^{8}}&&
\end{flalign*}

\begin{flalign*}
\sum_{m\geq1}
\frac{1}{\left(j_{p,m}^2-j_{p-1,n}^2\right)^{5}}
&= \frac{1}{192 j_{p-1,n}^{10}}\left(4 j_{p-1,n}^{4} p - 7 j_{p-1,n}^{4} + 2 j_{p-1,n}^{2} p^{3}\right.\\
&\quad {}- 18 j_{p-1,n}^{2} p^{2} + 52 j_{p-1,n}^{2} p - 48 j_{p-1,n}^{2} + 192 p)&&
\end{flalign*}

\begin{flalign*}
\sum_{m\geq1}
\frac{1}{\left(j_{p,m}^2-j_{p-1,n}^2\right)^{6}}
&= \frac{1}{960 j_{p-1,n}^{12}}\left(2 j_{p-1,n}^{6} + 11 j_{p-1,n}^{4} p^{2} - 47 j_{p-1,n}^{4} p\right.\\
&\quad {}+ 47 j_{p-1,n}^{4} + 2 j_{p-1,n}^{2} p^{4} - 28 j_{p-1,n}^{2} p^{3}\\
&\quad {}+ 142 j_{p-1,n}^{2} p^{2} - 308 j_{p-1,n}^{2} p + 240 j_{p-1,n}^{2} - 960 p)&&
\end{flalign*}

\begin{flalign*}
\sum_{m\geq1}
\frac{1}{\left(j_{p,m}^2-j_{p-1,n}^2\right)^{7}}
&= \frac{1}{11520 j_{p-1,n}^{14}}\left(34 j_{p-1,n}^{6} p - 59 j_{p-1,n}^{6} + 52 j_{p-1,n}^{4} p^{3}\right.\\
&\quad {}- 386 j_{p-1,n}^{4} p^{2} + 918 j_{p-1,n}^{4} p - 684 j_{p-1,n}^{4}\\
&\quad {}+ 4 j_{p-1,n}^{2} p^{5} - 80 j_{p-1,n}^{2} p^{4} + 620 j_{p-1,n}^{2} p^{3}\\
&\quad {}- 2320 j_{p-1,n}^{2} p^{2} + 4176 j_{p-1,n}^{2} p - 2880 j_{p-1,n}^{2}\\
&\quad {}+ 11520 p)&&
\end{flalign*}

\begin{flalign*}
\sum_{m\geq1}
\frac{1}{\left(j_{p,m}^2-j_{p-1,n}^2\right)^{8}}
&= \frac{1}{80640 j_{p-1,n}^{16}}\left(17 j_{p-1,n}^{8} + 180 j_{p-1,n}^{6} p^{2} - 724 j_{p-1,n}^{6} p\right.\\
&\quad {}+ 695 j_{p-1,n}^{6} + 114 j_{p-1,n}^{4} p^{4} - 1268 j_{p-1,n}^{4} p^{3}\\
&\quad {}+ 5152 j_{p-1,n}^{4} p^{2} - 8958 j_{p-1,n}^{4} p + 5508 j_{p-1,n}^{4}\\
&\quad {}+ 4 j_{p-1,n}^{2} p^{6} - 108 j_{p-1,n}^{2} p^{5} + 1180 j_{p-1,n}^{2} p^{4}\\
&\quad {}- 6660 j_{p-1,n}^{2} p^{3} + 20416 j_{p-1,n}^{2} p^{2} - 32112 j_{p-1,n}^{2} p\\
&\quad {}+ 20160 j_{p-1,n}^{2} - 80640 p)&&
\end{flalign*}

\begin{flalign*}
\sum_{m\geq1}
\frac{1}{\left(j_{p,m}^2-j_{p-1,n}^2\right)^{9}}
&= \frac{1}{645120 j_{p-1,n}^{18}}\left(248 j_{p-1,n}^{8} p - 430 j_{p-1,n}^{8} + 768 j_{p-1,n}^{6} p^{3}\right.\\
&\quad {}- 5202 j_{p-1,n}^{6} p^{2} + 11387 j_{p-1,n}^{6} p - 7954 j_{p-1,n}^{6}\\
&\quad {}+ 240 j_{p-1,n}^{4} p^{5} - 3678 j_{p-1,n}^{4} p^{4} + 22128 j_{p-1,n}^{4} p^{3}\\
&\quad {}- 64902 j_{p-1,n}^{4} p^{2} + 91788 j_{p-1,n}^{4} p - 49104 j_{p-1,n}^{4}\\
&\quad {}+ 4 j_{p-1,n}^{2} p^{7} - 140 j_{p-1,n}^{2} p^{6} + 2044 j_{p-1,n}^{2} p^{5}\\
&\quad {}- 16100 j_{p-1,n}^{2} p^{4} + 73696 j_{p-1,n}^{2} p^{3}\\
&\quad {}- 195440 j_{p-1,n}^{2} p^{2} + 277056 j_{p-1,n}^{2} p - 161280 j_{p-1,n}^{2}\\
&\quad {}+ 645120 p)&&
\end{flalign*}

\begin{flalign*}
\sum_{m\geq1}
\frac{1}{\left(j_{p,m}^2-j_{p-1,n}^2\right)^{10}}
&= \frac{1}{11612160 j_{p-1,n}^{20}}\left(248 j_{p-1,n}^{10} + 4288 j_{p-1,n}^{8} p^{2} - 16688 j_{p-1,n}^{8} p\right.\\
&\quad {}+ 15687 j_{p-1,n}^{8} + 5808 j_{p-1,n}^{6} p^{4} - 57696 j_{p-1,n}^{6} p^{3}\\
&\quad {}+ 210294 j_{p-1,n}^{6} p^{2} - 330786 j_{p-1,n}^{6} p + 187236 j_{p-1,n}^{6}\\
&\quad {}+ 988 j_{p-1,n}^{4} p^{6} - 19764 j_{p-1,n}^{4} p^{5} + 162376 j_{p-1,n}^{4} p^{4}\\
&\quad {}- 698652 j_{p-1,n}^{4} p^{3} + 1650100 j_{p-1,n}^{4} p^{2}\\
&\quad {}- 2007288 j_{p-1,n}^{4} p + 964512 j_{p-1,n}^{4} + 8 j_{p-1,n}^{2} p^{8}\\
&\quad {}- 352 j_{p-1,n}^{2} p^{7} + 6608 j_{p-1,n}^{2} p^{6} - 68992 j_{p-1,n}^{2} p^{5}\\
&\quad {}+ 437192 j_{p-1,n}^{2} p^{4} - 1717408 j_{p-1,n}^{2} p^{3}\\
&\quad {}+ 4072032 j_{p-1,n}^{2} p^{2} - 5309568 j_{p-1,n}^{2} p + 2903040 j_{p-1,n}^{2}\\
&\quad {}- 11612160 p)&&
\end{flalign*}
	
\end{document}